\numberwithin{equation}{section}
\newtheorem{thm}{Theorem}[section]
\newtheorem{cor}[thm]{Corollary}
\newtheorem{lem}[thm]{Lemma}
\newtheorem{prop}[thm]{Proposition}
\newtheorem{defn}[thm]{Definition}
\theoremstyle{definition}
\newtheorem{rmk}[thm]{Remark}
\newcounter{alphabet}
\newcounter{tmp}
\newcommand{\bysame}{\leavevmode\hbox to3em{\hrulefill}\,}
\begin{document}
\baselineskip=21pt
\markboth{} {}

\title[Approximation of the inverse frame operator and stability]
{Approximation of the inverse frame operator and stability of Hilbert$-$Schmidt frames}

\author{Anirudha Poria}

\address{Department of Mathematics, Indian Institute of Technology Guwahati, Assam 781039, India}
\email{a.poria@iitg.ernet.in}
\keywords{Frames; Hilbert$-$Schmidt frames; HS-Riesz bases; inverse HS-frame operator;  perturbation; projection method; stability.} \subjclass[2010]{Primary
 42C15; Secondary 46C50, 47A58.}

\begin{abstract} 
In this paper we study the Hilbert$-$Schmidt frame (HS-frame) theory for separable Hilbert spaces. We first present some characterizations of HS-frames and prove that HS-frames share many important properties with frames. Then we show how the inverse of the HS-frame operator can be approximated using finite-dimensional methods. Finally we present a classical perturbation result and prove that HS-frames are stable under small perturbations.  
\end{abstract}
\date{\today}
\maketitle
\def\BC{{\mathbb C}} \def\BQ{{\mathbb Q}}
\def\BR{{\mathbb R}} \def\BI{{\mathbb I}}
\def\BZ{{\mathbb Z}} \def\BD{{\mathbb D}}
\def\BP{{\mathbb P}} \def\BB{{\mathbb B}}
\def\BS{{\mathbb S}} \def\BH{{\mathbb H}}
\def\BE{{\mathbb E}}  \def\BK{{\mathbb K}}
\def\BN{{\mathbb N}}
\def\g{{\mathcal{G}_j}}
\def\v{{\mathcal{V}_j}}
\def\a{{\mathcal{A}_j}}
\def\gtr{{\mathcal{G}_j^*}}
\def\gt{{\mathcal{\tilde{G}}_j}}
\def\ga{{\varGamma_j}}
\vspace{-.5cm}

\section{Introduction}
The concept of a frame in Hilbert spaces has been introduced in 1952 by Duffin and Schaeffer \cite{duf52}, in the context of nonharmonic Fourier series (see \cite{you01}). After the work of Daubechies et al. \cite{dau86} frame theory got considerable attention outside signal processing and began to be more broadly studied (see \cite{chr13, gro01}). A frame for a Hilbert space is a redundant set of vectors in Hilbert space which provides non-unique representations of vectors in terms of frame elements. The redundancy and flexibility offered by frames has spurred their application in several areas of mathematics, physics, and engineering such as wavelet theory, sampling theory, signal processing and many other well known fields.

Throughout this paper, $\BH$ and $\BK$ are separable Hilbert spaces, $\mathcal{L}(\BH)$ the algebra of all bounded linear operators on $\BH$, $I$ the identity operator on $\BH$, and $J$ is a countable index set. Recall that a family $\{f_j: j \in J \}$ in $\BH$ is called a {\it frame} for $\BH$, if there exist constants $0 < A \leq B < \infty $ such that for all $f \in \BH$ 
\begin{equation}
A \Vert f \Vert^2 \leq \sum_{j \in J } \vert \langle f,f_j \rangle \vert^2 \leq  B \Vert f \Vert^2.
\end{equation}
The constants $A$ and $B$ are called {\it lower} and {\it upper frame bounds}. We refer to \cite{dau92, han00, por16} for basic results on frames and \cite{ask01, kaf09, por15, sun06} for generalizations of frames.

Applications of frames, especially in the last decade, motivated the researcher to find some generalization of frames. Hilbert$-$Schmidt frames, or simply HS-frames were introduced in \cite{sad12} as a class of von Neumann$-$Schatten $p$-frames, which generalized all the existing frames such as $g$-frames \cite{sun06}, bounded quasi-projectors \cite{for04}, frames of subspaces \cite{cas04}, pseudo-frames \cite{li04}, oblique frames \cite{chr04}, outer frames \cite{ald04}, and time-frequency localization operators \cite{dor06}. Recent applications of HS-frames (see \cite{por17}), inspired us to study HS-frames in Hilbert spaces. It is well known that $g$-frames and $g$-Riesz bases in Hilbert spaces have some properties similar to those of frames and Riesz bases, but not all the properties are similar, e.g., exact $g$-frames are not equivalent to $g$-Riesz bases (see \cite{sun06, sun07}). The natural question to ask is: which properties of the frame, or the $g$-frame may be extended to the HS-frame for a Hilbert space? In Section \ref{sec2}, we investigate this problem. We introduce the synthesis operator for the HS-frame and using the synthesis operator, we establish some necessary and sufficient conditions for a HS-Bessel sequence, a HS-frame, and a HS-Riesz basis in a Hilbert space. We also characterize HS-frames from the point of view of operator theory and discuss the relation between a HS-frame and a HS-Riesz basis.

The reconstruction formula for a frame allows every element in the Hilbert space to be written as a linear combination of the frame elements, with frame coefficients. Calculations of those coefficients require knowledge of the inverse frame operator. But in practice it is very difficult to invert the frame operator if the Hilbert space is infinite dimensional. Calculations of the inverse frame operator for HS-frames in infinite dimensional Hilbert space is also very difficult. Christensen introduced the projection method in \cite{chr93} and the strong projection method in \cite{chr96} to approximate the frame coefficients. Following Christensen in \cite{cass97, cas2000, chr00}, the authors proved that the inverse frame operator can be approximated arbitrarily closely using finite-dimensional linear algebra. Using similar methods, the authors of \cite{abd11} proved approximation results for inverse $g$-frame operators. In Section \ref{sec3}, we derive a method to approximate the inverse HS-frame operator in the strong operator topology, using finite subsets of the HS-frame.

Given a family $\{g_j: j \in J \} \subseteq \BH$ which is close to the frame or Riesz basis $\{f_j: j \in J \} \subseteq \BH$, finding conditions to ensure that $\{g_j: j \in J \}$ is also a frame or Riesz basis is called the stability problem. This problem is important in practice, so it has received much attentions and is therefore studied widely by many authors (see \cite{chr95, fav95, naj08, sun07}). Since frames can be characterized in terms of operators, many results on perturbations of frames can also be characterized from the operator point of view (see \cite{cas97, guo13}). In Section \ref{sec4}, we study the stability of HS-frames. We first present a classical perturbation result of HS-frames. Then we give other perturbations of HS-frames.
\section{Characterization of Hilbert$-$Schmidt frames}\label{sec2}
Let us denote $\{ \BK_j: j \in J \} \subset \BK$ as a sequence of Hilbert spaces and $\mathcal{L}(\BH,\BK_j)$ the collection of all bounded linear operators from $\BH$ to $\BK_j.$ Note that for any sequence $\{ \BK_j: j \in J \}$, we can always find a larger space $\BK$ containing all the Hilbert space $\BK_j$ by setting $\BK=\bigoplus_{j \in J} \BK_j$. The notion of a frame was extended to a $g$-frame by Sun \cite{sun06}. First we recall the definition of a $g$-frame. 
\begin{defn} \cite{sun06}
A family $\{ \Lambda_j \in \mathcal{L}(\BH,\BK_j):j \in J \}$ is called a generalized frame, or simply a $g$-frame, for $\BH$ with respect to $\{ \BK_j: j \in J \}$ if there are two constants $A, B>0$ such that for all $f \in \BH$
\begin{equation}
A \Vert f \Vert^2 \leq \sum_{j \in J }  \Vert \Lambda_j(f)  \Vert^2 \leq  B \Vert f \Vert^2.
\end{equation} 
\end{defn}
Let $\mathcal{L}(\BH)$ denote the $C^*$-algebra of all bounded linear operators on a complex separable Hilbert space $\BH$. For a compact operator $T \in \mathcal{L}(\BH)$, the eigenvalues of the positive operator $\vert T \vert = (T^*T)^{1/2}$ are called the singular values of $T$ and denoted by $s_j(T)$. We arrange the singular values $s_j(T)$ in a decreasing order and these are repeated according to multiplicity, that is, $s_1(T)\geq s_2(T) \geq ... \geq 0$. For $1 \leq p < \infty$, the von Neumann$-$Schatten p-class $C_p$ is defined to be the set of all compact operators $T$ for which 
\begin{equation}
\Vert T \Vert_p=(\tau \vert T \vert^p)^{\frac{1}{p}}=\bigg( \sum_{j=1}^{\infty} s_j^p(T) \bigg)^{\frac{1}{p}} < \infty,
\end{equation} 
where $\tau$ is the usual trace functional defined as $\tau(T)=\sum_{e \in E}\langle T(e),e \rangle$, and $E$ is any orthonormal basis of $\BH$. For $p=\infty$, let $C_{\infty}$ denote the class of all compact operators with $\Vert T \Vert_{\infty}=s_1(T)< \infty$. For more information about a von Neumann$-$Schatten ${p}$-class see \cite{rin71}. We recall that $C_2$ is a Banach space with respect to $\Vert . \Vert_2$, and also it is a Hilbert space with the inner product defined by $\big[ T,S \big]_{\tau}=\tau(S^*T)$. Also, $C_2$ is called the Hilbert$-$Schmidt class. An operator $T \in \mathcal{L}(\BH)$ belongs to the Hilbert$-$Schmidt class if and only if $\Vert T \Vert^2_{HS}:=\sum_{j \in J} \Vert Te_j \Vert^2 < \infty,$ where $\{ e_j \}_{j  \in J}$  is any orthonormal basis for $\BH$. Notice that $\Vert T \Vert_{HS}=\Vert T \Vert_2.$

\begin{defn} \cite{sad12}
A family $\{\g : j \in J\}$ of bounded linear operators from $\BH$ to $C_2 \subseteq \mathcal{L}(\BK)$ is said to be a Hilbert$-$Schmidt frame, or simply a HS-frame for $\BH$ with respect to $\BK$, if there exist constants $A, B >0$ such that for all $f \in \BH$
\begin{equation}\label{eq4}
A \Vert f \Vert^2 \leq \sum_{j \in J }  \Vert \g(f)  \Vert_2^2 \leq  B \Vert f \Vert^2.
\end{equation}
\end{defn}
If the right-hand side of $(\ref{eq4})$ holds, it is said to be a $HS$-$Bessel \; sequence$ with bound $B$. If $\{f \in \BH: \g(f)=0, \forall j \in J\}=\{0\}$, then $\{\g : j \in J\}$ is called $HS$-$complete$. If $\{\g : j \in J\}$ is HS-complete and there are positive constants $A$ and $B$ such that for any finite subset $J_1 \subset J$ and $\a \in C_2, j \in J_1,$
\begin{equation}\label{eq8}
A \sum_{j \in J_1 } \Vert \a \Vert^2 \leq \bigg\Vert \sum_{j \in J_1 } \g^* (\a)  \bigg\Vert^2 \leq  B \sum_{j \in J_1 } \Vert \a \Vert^2,
\end{equation}
then $\{\g : j \in J\}$ is called a $HS$-$Riesz \; basis$ for $\BH$ with respect to $\BK$.

For $x,y \in \BH$, we define the operator $x \otimes y : \BH \rightarrow \BH$ by
\[ (x \otimes y)(z)=\langle z,y \rangle x, \;\; z \in \BH.   \]
It is obvious that $\Vert x \otimes y \Vert= \Vert x \Vert \Vert y \Vert $, and if $x$ and $y$ are non-zero, then the rank of $x \otimes y$ is one. If $x,y,z,w \in \BH$, then the following equalities are easily verified:
\begin{eqnarray*}
(x \otimes y)(z \otimes w) & = & \langle z,y \rangle (x \otimes w) \\ (x \otimes y)^* &=& y \otimes x. 
\end{eqnarray*}
Let $y_0 \in \BK$ be an unit vector, the operator $\mathcal{W}: \BK \rightarrow C_2 \subseteq \mathcal{L}(\BK)$ defined by $\mathcal{W}x=x \otimes y_0$ is a linear isometry since $ \Vert \mathcal{W}x  \Vert_2 = \Vert x \otimes y_0 \Vert_2= \Vert x \Vert $. So we can consider $\BK$ as subspace of $C_2$, and hence it is a subspace of $\mathcal{L}(\BK)$.

\begin{lem}\label{lem00}
\cite{sad12} Let $\{\Lambda_j :j \in J\}$ be a $g$-frame for $\BH$ with respect to $\{ \BK_j: j \in J \}$. Then $\{\Lambda_j :j \in J\}$ is a HS-frame for $\BH$ with respect to $\BK=\bigoplus\limits_{j \in J}\BK_j.$
\end{lem}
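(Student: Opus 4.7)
My plan is to exploit the isometric embedding $\mathcal{W}:\BK\to C_2$, $\mathcal{W}x=x\otimes y_0$, already constructed just before the statement, which satisfies $\|\mathcal{W}x\|_2=\|x\|$. Since $\BK=\bigoplus_{j\in J}\BK_j$, each $\BK_j$ sits inside $\BK$ via the canonical embedding $\iota_j:\BK_j\hookrightarrow\BK$, which is also a linear isometry, i.e., $\|\iota_j(\xi)\|_{\BK}=\|\xi\|_{\BK_j}$ for every $\xi\in\BK_j$.

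First, I would fix a unit vector $y_0\in\BK$ (any choice will do; for instance one can take $y_0$ to lie in any fixed summand $\BK_{j_0}$). For each $j\in J$, define the composition $\widetilde{\Lambda}_j:\BH\to C_2\subseteq\mathcal{L}(\BK)$ by
\[
\widetilde{\Lambda}_j(f)\;=\;\mathcal{W}\bigl(\iota_j(\Lambda_j f)\bigr)\;=\;\iota_j(\Lambda_j f)\otimes y_0.
\]
Each $\widetilde{\Lambda}_j$ is bounded and linear since it is a composition of bounded linear maps, so under the agreed identification $\{\Lambda_j\}$ is viewed as the family $\{\widetilde{\Lambda}_j\}$ sending $\BH$ into $C_2$.

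Next, I would compute the Hilbert–Schmidt norm on each term. Since both $\iota_j$ and $\mathcal{W}$ are isometries,
\[
\bigl\|\widetilde{\Lambda}_j(f)\bigr\|_2
=\bigl\|\iota_j(\Lambda_j f)\otimes y_0\bigr\|_2
=\bigl\|\iota_j(\Lambda_j f)\bigr\|_{\BK}
=\|\Lambda_j f\|_{\BK_j}.
\]
Squaring and summing over $j\in J$, the $g$-frame inequality for $\{\Lambda_j\}$ immediately transfers term-by-term:
\[
A\|f\|^2\;\leq\;\sum_{j\in J}\|\Lambda_j f\|_{\BK_j}^{\,2}\;=\;\sum_{j\in J}\bigl\|\widetilde{\Lambda}_j(f)\bigr\|_2^{\,2}\;\leq\;B\|f\|^2
\]
for every $f\in\BH$, which is precisely the HS-frame condition (\ref{eq4}) with the same bounds $A,B$.

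I do not anticipate any serious obstacle: the whole content of the lemma is that the HS-frame definition is set up to be compatible with the scalar-valued norm under the isometric embedding of $\BK$ into $C_2$. The only mild point worth being careful about is writing the identification of $\Lambda_j$ with $\widetilde{\Lambda}_j$ cleanly, so that the statement "$\{\Lambda_j\}$ is a HS-frame" is interpreted unambiguously as saying that, through the canonical isometries $\iota_j$ and $\mathcal{W}$, the family takes values in $C_2\subseteq\mathcal{L}(\BK)$ with its HS-norm equal to the original $\BK_j$-norm.
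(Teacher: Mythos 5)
Your proof is correct and is exactly the identification the paper has in mind: the lemma is only cited from \cite{sad12} without proof, but the construction of the isometry $\mathcal{W}x=x\otimes y_0$ immediately preceding it and the remark following it ("we consider $\BK_j \subseteq \BK \subseteq C_2$") describe precisely your composition $\mathcal{W}\circ\iota_j\circ\Lambda_j$, under which $\Vert \widetilde{\Lambda}_j(f)\Vert_2=\Vert\Lambda_j f\Vert_{\BK_j}$ and the $g$-frame inequalities transfer term by term with the same bounds.
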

In \cite{sun06}, Sun has shown that bounded quasi-projectors \cite{for04}, frames of subspaces \cite{cas04}, pseudo-frames \cite{li04}, oblique frames \cite{chr04}, outer frames \cite{ald04}, and time-frequency localization operators \cite{dor06} are special classes of $g$-frames. Hence, Lemma \ref{lem00} implies that each of these classes is also a class of HS-frames.
\begin{rmk}
Each $\g \in \mathcal{L}(\BH, C_2)$ is an operator-valued function. So HS-frames $\{ \g : j \in J \}$, are an operator-valued frame. In particular, if we consider $\BK_j \subseteq \BK \subseteq C_2 \subseteq \mathcal{L}(\BK)$, then $g$-frames for $\BH$ with respect to $\{\BK_j: {j \in J} \}$ can be considered as HS-frames for $\BH$ with respect to $\BK$. Thus HS-frames share many useful properties with $g$-frames.
\end{rmk}
Suppose $\{\mathcal{X}_j : j \in J\}$ is a collection of normed spaces. Then $\prod\{\mathcal{X}_j : j \in J\}$ is a vector space if the linear operations are defined coordinatewise. Define
\[\bigoplus \mathcal{X}_j \equiv \Big\{ x \in \prod_{j \in J} \mathcal{X}_j: \Vert x \Vert=( \sum_{j \in J} \Vert x_j \Vert^2 )^{1/2} < \infty \Big\}. \]
with the inner product given by $\langle x, y \rangle=\sum_{j \in J} \langle x_j,y_j\rangle$. It is known that $\bigoplus \mathcal{X}_j$ is a Hilbert space if and only if so is each $\mathcal{X}_j$. 

Now we define the synthesis operator for a HS-frame. For this purpose, we first show that the series appearing in the definition of a synthesis operator converges unconditionally. So we need the next lemma.
\begin{lem}\label{lem1}
Let $\{\g : j \in J\}$ be a HS-Bessel sequence for $\BH$ with bound $B$. Then for each sequence $\{\a\}_{j \in J} \in \bigoplus C_2,$ the series $\sum_{j \in J} \g^*(\a)$ converges unconditionally.
\end{lem}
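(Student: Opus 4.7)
The plan is the standard duality argument from frame theory, transplanted to the Hilbert$-$Schmidt setting. The key point is that $C_2$ is itself a Hilbert space with inner product $[T,S]_\tau = \tau(S^*T)$, so each $\g \in \mathcal{L}(\BH, C_2)$ has a genuine Hilbert-space adjoint $\gtr : C_2 \to \BH$ satisfying $\langle \gtr(\a), f\rangle = [\a, \g(f)]_\tau$ for all $f \in \BH$. With this in hand, unconditional convergence of $\sum_j \gtr(\a)$ follows from a uniform bound on norms of finite partial sums.

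\textbf{Step 1: Bound partial sums by duality.} For any finite $F \subset J$, I would compute
\[
\Bigl\| \sum_{j \in F} \gtr(\a) \Bigr\| \; =\; \sup_{\|f\|=1} \Bigl| \Bigl\langle \sum_{j \in F} \gtr(\a),\, f \Bigr\rangle \Bigr| \;=\; \sup_{\|f\|=1}\Bigl|\sum_{j \in F} [\a, \g(f)]_\tau \Bigr|.
\]
Two applications of Cauchy$-$Schwarz (first in $C_2$, then in $\ell^2(F)$) combined with the Bessel bound give
\[
\sup_{\|f\|=1}\Bigl|\sum_{j \in F} [\a, \g(f)]_\tau \Bigr| \;\le\; \Bigl(\sum_{j \in F}\Vert \a\Vert_2^2\Bigr)^{1/2} \sup_{\|f\|=1}\Bigl(\sum_{j \in F}\Vert \g(f)\Vert_2^2\Bigr)^{1/2} \;\le\; \sqrt{B}\,\Bigl(\sum_{j \in F}\Vert \a\Vert_2^2\Bigr)^{1/2}.
\]

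\textbf{Step 2: Conclude unconditional convergence.} Since $\{\a\}_{j\in J}\in \bigoplus C_2$, the scalar series $\sum_{j\in J}\Vert \a\Vert_2^2$ converges. Given $\varepsilon>0$, choose a finite $F_0 \subset J$ with $\sum_{j\in J\setminus F_0}\Vert \a\Vert_2^2 < \varepsilon^2/B$. Step 1 then yields $\bigl\|\sum_{j \in F}\gtr(\a)\bigr\| < \varepsilon$ for every finite $F \subset J \setminus F_0$. By the standard characterization of unconditional convergence in a Banach space (every rearrangement Cauchy, equivalently the Cauchy tail criterion over finite subsets), the series $\sum_{j \in J}\gtr(\a)$ converges unconditionally in $\BH$.

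\textbf{Main obstacle.} There is no serious analytic difficulty; the argument is essentially a transcription of the classical Bessel-sequence fact $\bigl\|\sum c_j f_j\bigr\|\le \sqrt B \bigl(\sum |c_j|^2\bigr)^{1/2}$. The only point that needs a little care is identifying $\gtr$ correctly as the adjoint with respect to the $[\cdot,\cdot]_\tau$ inner product on $C_2$ so that the pairing $\langle \gtr(\a),f\rangle = [\a,\g(f)]_\tau$ can be used. Once this is set up, Cauchy$-$Schwarz and the Bessel bound do all the work.
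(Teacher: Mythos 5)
Your proof is correct and follows essentially the same route as the paper: the identical duality/Cauchy--Schwarz estimate $\bigl\|\sum_{j\in F}\g^*(\a)\bigr\|\le\sqrt{B}\bigl(\sum_{j\in F}\Vert\a\Vert_2^2\bigr)^{1/2}$ over finite index sets is the whole content of both arguments. The only (harmless) difference is in the last step: you conclude directly via the Cauchy-tail criterion for unconditional convergence, while the paper phrases the same bound as showing the series is weakly unconditionally Cauchy and then invokes unconditional convergence in $\BH$; your version is, if anything, the more self-contained way to finish.
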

\begin{proof}
Let $J_1 \subseteq J$ with $|J_1| < \infty,$ then
\begin{eqnarray*}
\bigg\Vert \sum_{j \in J_1} \g^*(\a) \bigg\Vert &=& \sup_{h \in \BH, \; \Vert h \Vert=1} \bigg| \bigg\langle \sum_{j \in J_1} \g^*(\a), h \bigg\rangle \bigg| \\
& \leq & \bigg( \sum_{j \in J_1} \Vert \a \Vert^2 \bigg)^{1/2} \sup_{h \in \BH, \; \Vert h \Vert=1} \bigg( \sum_{j \in J_1} \Vert \g(h) \Vert^2 \bigg)^{1/2} \\
& \leq & \sqrt{B} \bigg( \sum_{j \in J_1} \Vert \a \Vert^2 \bigg)^{1/2}.
\end{eqnarray*}
It follows that $\sum_{j \in J} \g^*(\a)$ is weakly unconditionally Cauchy and hence unconditionally convergent in $\BH$.
\end{proof}
\begin{defn}
Let $\{\g : j \in J\}$ be a HS-frame for $\BH$. Then the synthesis
operator for $\{\g : j \in J\}$ is the operator $ T:\bigoplus C_2 \rightarrow \BH$ defined by $T(\{ \a \}_{j \in J})=\sum_{j \in J} \g^*(\a).$
\end{defn}
The adjoint $T^*$ of the synthesis operator is called the $analysis \; operator$. The following lemma provides a formula for the analysis operator.
\begin{lem}
Let $\{\g : j \in J\}$ be a HS-frame for $\BH$. Then the analysis operator $T^*: \BH \rightarrow \bigoplus C_2$, given by $T^*(f)=\{ \g(f) \}_{j \in J}$ is well defined.
\end{lem}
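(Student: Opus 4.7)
The plan is to verify two things: first, that the formula $T^*(f)=\{\g(f)\}_{j\in J}$ actually lands in $\bigoplus C_2$, and second, that this operator genuinely coincides with the Hilbert-space adjoint of the synthesis operator $T$ defined in the preceding definition.

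For the first point, I would fix $f\in\BH$ and simply invoke the upper HS-frame bound: by the right-hand inequality of $(\ref{eq4})$ we have $\sum_{j\in J}\Vert\g(f)\Vert_2^2\leq B\Vert f\Vert^2<\infty$, so $\{\g(f)\}_{j\in J}\in\bigoplus C_2$ with $\Vert T^*f\Vert\leq\sqrt{B}\,\Vert f\Vert$. This already shows that the map is well defined as a bounded linear operator from $\BH$ into $\bigoplus C_2$.

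For the second point, I would pick an arbitrary $\{\a\}_{j\in J}\in\bigoplus C_2$ and $f\in\BH$ and compute, using the unconditional convergence guaranteed by Lemma \ref{lem1} and continuity of the inner product on $\BH$,
\[
\bigg\langle T(\{\a\}_{j\in J}),\,f\bigg\rangle_{\BH}=\bigg\langle\sum_{j\in J}\g^*(\a),f\bigg\rangle_{\BH}=\sum_{j\in J}\langle\g^*(\a),f\rangle_{\BH}.
\]
Since $\g^*:C_2\to\BH$ is the Hilbert-space adjoint of $\g:\BH\to C_2$, each summand equals $[\a,\g(f)]_{\tau}$, so the sum becomes $\sum_{j\in J}[\a,\g(f)]_{\tau}=\langle\{\a\}_{j\in J},\{\g(f)\}_{j\in J}\rangle_{\bigoplus C_2}$. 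Comparing with the defining identity $\langle T(\{\a\}),f\rangle=\langle\{\a\},T^*f\rangle$ forces $T^*f=\{\g(f)\}_{j\in J}$.

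The only mildly subtle point I expect is justifying the interchange of the inner product and the infinite sum, but this is immediate from the boundedness of $T$ (established via Lemma \ref{lem1}) together with the fact that partial sums $\sum_{j\in J_1}\g^*(\a)$ converge to $T(\{\a\})$ in $\BH$. Everything else reduces to the definition of the adjoint $\g^*$ and the definition of the inner product on $\bigoplus C_2$, so the argument is very short.
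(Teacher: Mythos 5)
Your proposal is correct and follows essentially the same route as the paper: both identify $T^*$ by the adjoint identity $\langle T(\{\a\}_{j\in J}),f\rangle=\sum_{j\in J}[\a,\g(f)]_{\tau}=\langle\{\a\}_{j\in J},\{\g(f)\}_{j\in J}\rangle$, using the unconditional convergence from Lemma \ref{lem1}. Your preliminary check that $\{\g(f)\}_{j\in J}$ actually lies in $\bigoplus C_2$ via the Bessel bound is a small point the paper leaves implicit, but it does not change the argument.
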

\begin{proof}
Let $f \in \BH$ and $\{\a\}_{j \in J} \in \bigoplus C_2$. Then
\begin{eqnarray*}
\langle T^*(f), \{\a\}_{j \in J} \rangle &=& \langle f, T \{\a\}_{j \in J} \rangle = \bigg\langle f, \sum_{j \in J} \g^*(\a) \bigg\rangle \\  &=& \sum_{j \in J} \big[ \g (f), \a \big]_{\tau}= \big\langle \{ \g (f)\}_{j \in J}, \{\a\}_{j \in J} \big\rangle.
\end{eqnarray*} 
Hence $T^*(f)=\{ \g(f) \}_{j \in J}$ is well defined.
\end{proof}
In the following proposition, we characterize the HS-Bessel sequence in terms of the synthesis operator.
\begin{prop}\label{pro1}
A sequence $\{\g : j \in J\} \subseteq \mathcal{L}(\BH, C_2)$ is a HS-Bessel sequence for $\BH$ with bound $B$ if and only if the synthesis operator $T$ is a well defined bounded operator with $\Vert T \Vert \leq \sqrt{B}$.
\end{prop}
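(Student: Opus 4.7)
The plan is to prove the standard equivalence between Bessel sequences and boundedness of the synthesis operator, in both directions, using the duality established in the previous lemma together with the finite-subset estimate already computed in Lemma \ref{lem1}.

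For the forward direction, I would start by observing that the computation carried out in the proof of Lemma \ref{lem1} already gives, for every finite subset $J_1 \subseteq J$ and every $\{\mathcal{A}_j\}_{j \in J} \in \bigoplus C_2$, the bound
\[
\Bigl\Vert \sum_{j \in J_1} \mathcal{G}_j^*(\mathcal{A}_j) \Bigr\Vert \le \sqrt{B}\Bigl(\sum_{j \in J_1}\Vert \mathcal{A}_j\Vert^2\Bigr)^{1/2}.
\]
By Lemma \ref{lem1} the series $\sum_{j \in J} \mathcal{G}_j^*(\mathcal{A}_j)$ converges unconditionally in $\BH$, so $T$ is well defined on $\bigoplus C_2$, and passing to the limit over an increasing sequence of finite subsets exhausting $J$ yields $\Vert T(\{\mathcal{A}_j\})\Vert \le \sqrt{B}\,\Vert \{\mathcal{A}_j\}\Vert$, hence $\Vert T\Vert \le \sqrt{B}$.

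For the converse, assume $T$ is a well defined bounded operator with $\Vert T\Vert \le \sqrt{B}$. Fix $f \in \BH$ and a finite subset $J_1 \subset J$. I would test the Bessel inequality on $f$ by choosing $\mathcal{A}_j = \mathcal{G}_j(f)$ for $j \in J_1$ and $\mathcal{A}_j = 0$ otherwise; this gives an element of $\bigoplus C_2$ to which $T$ applies. Using the identity $\big[\mathcal{G}_j(f),\mathcal{G}_j(f)\big]_\tau = \Vert \mathcal{G}_j(f)\Vert_2^2$ and the duality between $T$ and the coordinates of its input (exactly as in the proof of the previous lemma), I compute
\[
\sum_{j \in J_1}\Vert \mathcal{G}_j(f)\Vert_2^2 = \bigl\langle T(\{\mathcal{A}_j\}),\, f \bigr\rangle \le \Vert T\Vert\,\Vert \{\mathcal{A}_j\}\Vert\,\Vert f\Vert \le \sqrt{B}\,\Bigl(\sum_{j \in J_1}\Vert \mathcal{G}_j(f)\Vert_2^2\Bigr)^{1/2}\Vert f\Vert.
\]
Dividing through (after discarding the trivial case of a zero sum) yields $\sum_{j \in J_1}\Vert \mathcal{G}_j(f)\Vert_2^2 \le B\Vert f\Vert^2$, and taking the supremum over all finite $J_1 \subset J$ gives the HS-Bessel bound.

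I do not expect any real obstacle: the only subtlety is justifying the duality computation $\langle T(\{\mathcal{A}_j\}), f\rangle = \sum_{j \in J_1}[\mathcal{G}_j(f),\mathcal{A}_j]_\tau$ for $f$ fixed and $\{\mathcal{A}_j\}$ finitely supported, and this is a direct application of the definitions of $T$ and of the inner product on $C_2$, together with $\langle \mathcal{G}_j^*(\mathcal{A}_j),f\rangle = [\mathcal{A}_j,\mathcal{G}_j(f)]_\tau$. All other steps are routine Cauchy--Schwarz and passage to the supremum.
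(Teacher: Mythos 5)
Your proposal is correct and follows essentially the same route as the paper: the forward direction invokes the finite-subset estimate and unconditional convergence from Lemma \ref{lem1}, and the converse tests $T$ on the finitely supported sequence $\{\mathcal{G}_j(f)\}_{j\in J_1}$, uses $\sum_{j\in J_1}\Vert\mathcal{G}_j(f)\Vert_2^2=\langle T(\{\mathcal{G}_j(f)\}),f\rangle$, and divides by the square root of the sum before taking the supremum over finite $J_1$. No substantive differences.
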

\begin{proof}
Let $\{\g : j \in J\}$ is a HS-Bessel sequence for $\BH$ with bound $B$. Then by Lemma \ref{lem1}, $T$ is a well defined bounded operator with $\Vert T \Vert \leq \sqrt{B}$.

Conversely, let $T$ be a well defined and $\Vert T \Vert \leq \sqrt{B}$. Let $J_1 \subseteq J$ with $|J_1| < \infty,$ then  
\[ \sum_{j \in J_1} \Vert \g(f) \Vert^2=\sum_{j \in J_1} \langle \gtr \g (f),f \rangle= \big\langle T(\{ \g (f) \}_{j \in J_1}),f \big\rangle \leq \Vert T \Vert \Vert \{\g(f)\}_{j \in J_1} \Vert \Vert f \Vert, \; \forall f \in \BH. \] Therefore 
\[ \sum_{j \in J_1} \Vert \g(f) \Vert^2 \leq \Vert T \Vert \bigg( \sum_{j \in J_1} \Vert \g(f) \Vert^2 \bigg)^{1/2}\Vert f \Vert \leq \Vert T \Vert^2 \Vert f \Vert^2 \leq B \Vert f \Vert^2.  \] It follows that $\{\g : j \in J\}$ is a HS-Bessel sequence for $\BH$ with bound $B$.
\end{proof}
\begin{defn}
Let $\{\g : j \in J\}$ be a HS-frame for $\BH$. Then the HS-frame operator for $\{\g : j \in J\}$ is the operator $S:\BH \rightarrow \BH$ defined by $Sf=TT^*f=\sum\limits_{j \in J} \gtr\g(f)$.
\end{defn}
If $\{\g : j \in J\}$ is a HS-frame with bounds $A$ and $B$, then for any $f \in \BH$ we have
\[ \langle Sf, f \rangle=\bigg\langle \sum\limits_{j \in J} \gtr\g(f),f \bigg\rangle=\sum\limits_{j \in J} \big[ \g(f),\g(f) \big]_{\tau}=\sum\limits_{j \in J} \Vert \g(f) \Vert^2. \]
Hence \[ A \langle f,f \rangle \leq \langle Sf,f\rangle \leq  B \langle f,f \rangle,\mathrm{\; i.e.,\;} AI \leq S \leq BI. \]
Therefore S is a bounded, invertible and positive self-adjoint operator. Also, the following reconstruction formula holds for all $f \in \BH$ 
\begin{equation}
f=SS^{-1}f=S^{-1}Sf=\sum_{j \in J} \gtr \g S^{-1}f=\sum\limits_{j \in J} S^{-1}\gtr \g f.
\end{equation}
Moreover, $\{\g S^{-1}: j \in J \}$ is a HS-frame with bounds $B^{-1}$ and $A^{-1}$. We call $\{\gt = \g S^{-1}: j \in J \}$ the $canonical \;dual \; HS$-$frame$ of $\{\g : j \in J\}$. A HS-frame $\{\v : j \in J\}$ is called an $alternate \; dual \; HS$-$frame$ of $\{\g : j \in J \}$ if for all $f \in \BH$ the following identity holds: 
\begin{equation}\label{eq01}
f=\sum\limits_{j \in J}\gtr \v f=\sum\limits_{j \in J} \v^{*}\g f .
\end{equation}
The following result provides a connection between a HS-frame and a HS operator.
\begin{prop}
Let $S \in \mathcal{L}(\BH)$ be a HS-frame operator. Then, $S$ is a Hilbert$-$Schmidt operator if and only if $\BH$ is finite-dimensional.
\end{prop}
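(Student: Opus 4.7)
The plan is to prove both implications using the fundamental bound $AI \leq S \leq BI$ available for any HS-frame operator, which follows from the lower and upper frame inequalities as derived in the excerpt just before the proposition.

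For the easy direction ($\BH$ finite-dimensional $\Rightarrow$ $S$ is Hilbert--Schmidt), I would simply note that on a finite-dimensional Hilbert space every bounded operator is Hilbert--Schmidt: its singular value sequence is finite, so the $\ell^2$-sum $\sum_j s_j(S)^2$ is automatically finite. Equivalently, for any finite orthonormal basis $\{e_j\}$, the sum $\sum_j \|Se_j\|^2$ has finitely many terms and is bounded by $(\dim \BH)\|S\|^2$.

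For the nontrivial direction ($S$ Hilbert--Schmidt $\Rightarrow$ $\BH$ finite-dimensional), I would use contrapositive and the lower frame bound. Since $S$ is a positive self-adjoint operator with $S \geq AI$ for some $A>0$, Cauchy--Schwarz gives
\[
\|Sf\|\,\|f\| \geq \langle Sf,f\rangle \geq A\|f\|^2,
\]
so $\|Sf\| \geq A\|f\|$ for every $f \in \BH$. Fixing any orthonormal basis $\{e_j\}_{j \in J}$ of $\BH$, this yields
\[
\|S\|_{HS}^2 \;=\; \sum_{j \in J}\|Se_j\|^2 \;\geq\; A^2\sum_{j\in J}1 \;=\; A^2\,|J|.
\]
If $\BH$ is infinite-dimensional then $|J|=\infty$, so $\|S\|_{HS}=\infty$ and $S$ cannot belong to the Hilbert--Schmidt class. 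Contrapositively, if $S$ is Hilbert--Schmidt then $\BH$ must be finite-dimensional.

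There is no real obstacle here; the content of the argument is simply that a positive operator bounded below by a strictly positive multiple of the identity has all its eigenvalues (equivalently, singular values) at least $A$, and an infinite sequence of such values fails to lie in $\ell^2$. I would make sure to cite the reconstruction-formula passage which establishes $AI \le S \le BI$, since that is the only nontrivial input beyond elementary Hilbert-space arithmetic.
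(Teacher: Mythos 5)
Your proof is correct, but the nontrivial direction is argued differently from the paper. For ``$S$ Hilbert--Schmidt $\Rightarrow \dim\BH<\infty$'' the paper uses a soft compactness argument: Hilbert--Schmidt operators are compact, $S$ is invertible (since $AI\le S\le BI$), and $SS^{-1}=I$ would force the identity to be compact, which happens only in finite dimensions. You instead exploit the lower frame bound quantitatively: from $S\ge AI$ and Cauchy--Schwarz you get $\Vert Se_j\Vert\ge A$ for every vector of an orthonormal basis, so $\Vert S\Vert_{HS}^2\ge A^2\,\mathrm{card}\,J$, which diverges when $\BH$ is infinite-dimensional. Both arguments are valid; yours is more elementary (it needs neither the compactness of Hilbert--Schmidt operators nor the characterization of spaces with compact identity) and gives the explicit lower bound $\Vert S\Vert_{HS}^2\ge A^2\dim\BH$, while the paper's version is the standard ``invertible compact operator'' reflex and generalizes immediately to any Schatten class $C_p$ or indeed to any ideal of compact operators. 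Your easy direction (every operator on a finite-dimensional space is Hilbert--Schmidt) is essentially the paper's, which merely replaces your bound $(\dim\BH)\Vert S\Vert^2$ by $B^2\,\mathrm{card}\,J$ via the Bessel inequality.
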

\begin{proof}
Let $\{e_n\}_{n \in J}$ be an orthonormal basis for $\BH$. Using Lemma \ref{lem1}, we get
\[
\Vert S \Vert^2_{HS}=\sum_{n \in J} \Vert Se_n \Vert^2=\sum_{n \in J} \bigg\Vert \sum_{j \in J} \g^* \g (e_n) \bigg\Vert^2 \leq B \sum_{n \in J} \sum_{j \in J} \Vert \g (e_n) \Vert^2 \leq B \sum_{n \in J} B \Vert e_n \Vert^2.
\]
If dim $\BH=$ card $J < \infty$, we have $\Vert S \Vert^2_{HS} \leq B^2 \mathrm{\; card \;} J < \infty.$

Conversely, let $S$ be a Hilbert$-–$Schmidt operator. Since Hilbert$-$Schmidt operators are compact, $S$ is compact. Also, $S$ is invertible on $\BH$. Thus $SS^{-1}=I$ implies that the identity $I$ must be a compact operator. Hence dim $\BH< \infty.$
\end{proof}
\begin{rmk}
Since $\BH$ is an infinite-dimensional Hilbert space, the HS-frame operator $S$ cannot be a Hilbert-–Schmidt operator.
\end{rmk}
\begin{lem}\label{lem2}
\cite{chr13} Suppose that $U: \BK \rightarrow \BH$ is a bounded surjective operator.
Then there exists a bounded operator (called the pseudo-inverse of $U$) $U^{\dagger}: \BH \rightarrow \BK$ for which \[UU^{\dagger}f=f, \;\; \forall f \in \BH. \]

If $U$ is a bounded invertible operator, then $U^{\dagger}=U^{-1}$.
\end{lem}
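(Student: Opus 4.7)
The plan is to construct $U^{\dagger}$ by inverting $U$ on the orthogonal complement of its kernel, invoking the bounded inverse theorem to get boundedness.

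First I would decompose the domain. Since $U$ is bounded, $\ker U$ is a closed subspace of $\BK$, so $\BK=\ker U \oplus (\ker U)^{\perp}$. Let $\tilde{U}$ denote the restriction of $U$ to $(\ker U)^{\perp}$. Then $\tilde{U}:(\ker U)^{\perp}\to \BH$ is injective by construction, and surjective because $\tilde{U}\bigl((\ker U)^{\perp}\bigr)=U(\BK)=\BH$ (any $x\in\BK$ can be replaced by its projection onto $(\ker U)^{\perp}$ without changing $Ux$). Hence $\tilde{U}$ is a bijective bounded linear operator between Hilbert spaces.

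Next I would apply the bounded inverse theorem (a consequence of the open mapping theorem) to conclude that $\tilde{U}^{-1}:\BH\to (\ker U)^{\perp}$ is bounded. Then I would define
\[
U^{\dagger}:=\iota\circ\tilde{U}^{-1},
\]
where $\iota:(\ker U)^{\perp}\hookrightarrow \BK$ is the (isometric) inclusion. This is a composition of bounded operators, hence bounded, and for every $f\in\BH$,
\[
UU^{\dagger}f=U\bigl(\iota\tilde{U}^{-1}f\bigr)=\tilde{U}\tilde{U}^{-1}f=f,
\]
which is the required identity. For the final assertion, if $U$ is invertible then $\ker U=\{0\}$, so $(\ker U)^{\perp}=\BK$, $\iota$ is the identity, and $\tilde{U}=U$; thus $U^{\dagger}=U^{-1}$.

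There is no real obstacle here — the only nontrivial ingredient is the bounded inverse theorem, and the rest is a routine Hilbert-space orthogonal-decomposition argument. The one subtlety worth flagging is that $U^{\dagger}$ is not a left inverse in general (only a right inverse), since in fact $U^{\dagger}U$ equals the orthogonal projection of $\BK$ onto $(\ker U)^{\perp}$, not the identity on $\BK$; but the statement only asserts the right-inverse property, so this is exactly what is needed.
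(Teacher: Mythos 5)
Your construction is correct and is precisely the standard argument for the pseudo-inverse of a bounded surjection: the paper itself gives no proof, only a citation to Christensen's book, and the proof there proceeds exactly as you do, by restricting $U$ to $(\ker U)^{\perp}$, invoking the bounded inverse theorem on the resulting bijection, and composing with the inclusion. Your closing remark that $U^{\dagger}U$ is the orthogonal projection onto $(\ker U)^{\perp}$ rather than the identity is also accurate and consistent with the lemma asserting only a right inverse.
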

In the following proposition we establish a relationship between a HS-frame and the associated synthesis operator.
\begin{prop}\label{pro2}
A sequence $\{\g : j \in J\} \subseteq \mathcal{L}(\BH, C_2)$ is a HS-frame for $\BH$ if and only if the synthesis operator $T$ is a well defined, bounded and surjective operator.
\end{prop}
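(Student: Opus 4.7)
The plan is to use the identification $\sum_{j\in J}\|\g(f)\|_2^2 = \|T^*f\|^2$ coming from the analysis-operator formula already established, together with Proposition \ref{pro1} to translate between the upper HS-frame bound and boundedness of $T$. The remaining content of the equivalence is that the lower HS-frame bound corresponds exactly to surjectivity of $T$.

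For the forward direction, I would first note that every HS-frame is in particular a HS-Bessel sequence, so Proposition \ref{pro1} immediately gives that $T$ is well defined and bounded with $\|T\|\leq\sqrt{B}$. To obtain surjectivity, I would invoke the fact, already proved in the excerpt, that the HS-frame operator $S = TT^*$ is bounded, positive and invertible. For any $f\in\BH$ this lets me write $f = S S^{-1}f = T(T^*S^{-1}f)$, so $f\in\operatorname{range}(T)$.

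For the converse, the boundedness hypothesis on $T$ together with Proposition \ref{pro1} already gives the HS-Bessel condition with upper bound $\|T\|^2$. The real work is the lower HS-frame bound, and this is where I would use Lemma \ref{lem2}: surjectivity of $T$ produces a bounded pseudo-inverse $T^{\dagger}:\BH\to\bigoplus C_2$ with $TT^{\dagger}=I_{\BH}$. Taking adjoints yields $(T^{\dagger})^{*}T^{*}=I_{\BH}$, so for every $f\in\BH$
\[
\|f\|^{2}=\|(T^{\dagger})^{*}T^{*}f\|^{2}\leq \|T^{\dagger}\|^{2}\|T^{*}f\|^{2}=\|T^{\dagger}\|^{2}\sum_{j\in J}\|\g(f)\|_{2}^{2},
\]
which furnishes the lower bound $A=\|T^{\dagger}\|^{-2}$.

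I expect the only subtle step to be the lower bound in the converse; packaging surjectivity of $T$ as boundedness below of $T^*$ via the pseudo-inverse is the standard move, and the rest is bookkeeping that follows from Proposition \ref{pro1} and the analysis-operator formula. No separate argument for unconditional convergence or well-definedness of $T^*$ is needed here since those have been handled earlier in the section.
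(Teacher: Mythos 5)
Your argument is correct and follows essentially the same route as the paper: Proposition \ref{pro1} for boundedness, invertibility of $S=TT^*$ for surjectivity in the forward direction, and the pseudo-inverse from Lemma \ref{lem2} with $(T^{\dagger})^*T^*=I$ for the lower bound in the converse. The only cosmetic difference is that you spell out $f=T(T^*S^{-1}f)$ where the paper simply asserts that invertibility of $TT^*$ forces $T$ to be onto.
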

\begin{proof}
If $\{\g : j \in J\}$ is a HS-frame for $\BH$, then $S=TT^*$ is invertible. So $T$ is surjective. Conversely, let $T$ be well defined, bounded and surjective operator. Then by Proposition \ref{pro1}, the sequence $\{\g : j \in J\}$ is a HS-Bessel sequence for $\BH$. Since $T$ is surjective, by Lemma \ref{lem2}, there exists an operator $T^{\dagger}:\BH \rightarrow \bigoplus C_2$ such that $TT^{\dagger}=I.$ Hence $(T^{\dagger})^*T^*=I$. Then for all $f \in \BH$,  
\begin{equation*}
\Vert f \Vert^2 \leq \Vert (T^{\dagger})^* \Vert^2 \Vert T^*f \Vert^2=\Vert T^{\dagger} \Vert^2 \Vert T^*f \Vert^2= \Vert T^{\dagger} \Vert^2 \sum_{j \in J} \Vert \g(f)\Vert^2. 
\end{equation*} 
It follows that $\{\g : j \in J\}$ is a HS-frame for $\BH$ with lower HS-frame bound
$\Vert T^{\dagger} \Vert^{-2}$ and upper HS-frame bound $\Vert T \Vert^2$.
\end{proof}
Now we establish the relation between a HS-frame and a HS-Riesz basis. We first establish the following lemma.
\begin{lem}\label{lem4}
A sequence $\{\g : j \in J\} \subseteq \mathcal{L}(\BH, C_2)$ is a HS-Riesz basis for $\BH$ with bounds $A$ and $B$ if and only if the synthesis operator $T$ is a linear homeomorphism such that
\begin{equation}\label{eq7}
 A \sum_{j \in J } \Vert \a \Vert^2 \leq \Vert T (\{ \a \}_{j \in J}) \Vert^2 \leq  B \sum_{j \in J } \Vert \a \Vert^2, \;\; \forall  \{ \a \}_{j \in J} \in \bigoplus C_2.
\end{equation}
\end{lem}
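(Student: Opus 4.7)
The plan is to handle the two directions separately, leveraging Proposition \ref{pro1} to identify HS-Bessel bounds with boundedness of $T$, and using the HS-completeness hypothesis to control the range of $T$.

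For the forward direction, I would begin by upgrading the finite-sum inequality (\ref{eq8}) to the all-of-$\bigoplus C_2$ inequality (\ref{eq7}). First, mimic the weakly-unconditionally-Cauchy argument of Lemma \ref{lem1}, but now using the \emph{upper} bound of (\ref{eq8}) instead of the Bessel bound, to show that $\sum_{j \in J} \g^*(\a)$ converges unconditionally for every $\{\a\}_{j \in J} \in \bigoplus C_2$. Thus $T$ is well-defined on all of $\bigoplus C_2$, and passing to the limit in (\ref{eq8}) over an exhausting sequence of finite subsets $J_1 \nearrow J$ yields (\ref{eq7}) with the same constants $A$ and $B$. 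The upper half of (\ref{eq7}) says $\|T\| \le \sqrt{B}$, so $T$ is bounded; the lower half says $T$ is bounded below, hence injective with closed range.

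It remains to show that $T$ is surjective. Here is where HS-completeness enters: the hypothesis $\{f \in \BH : \g(f) = 0 \ \forall j\} = \{0\}$ is exactly the injectivity of the analysis operator $T^*(f) = \{\g(f)\}_{j\in J}$. Since $\ker T^* = (\overline{\operatorname{range} T})^\perp$, injectivity of $T^*$ forces $\operatorname{range} T$ to be dense in $\BH$; combined with the closed-range property this gives $\operatorname{range} T = \BH$. Thus $T$ is a bounded bijection between Hilbert spaces, and its inverse is bounded (either invoke the open mapping theorem, or read $\|T^{-1} f\|^2 \le A^{-1}\|f\|^2$ directly from the lower half of (\ref{eq7})). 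So $T$ is a linear homeomorphism.

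For the converse, assume $T$ is a linear homeomorphism satisfying (\ref{eq7}). Restricting (\ref{eq7}) to sequences supported on a finite subset $J_1 \subset J$ immediately recovers (\ref{eq8}), giving the two-sided inequality required of an HS-Riesz basis. Finally, HS-completeness follows from surjectivity of $T$: since $T$ is onto, its adjoint $T^*$ is injective, so $\g(f) = 0$ for all $j$ forces $f = 0$. I expect the only nontrivial technical point to be the very first step — producing a genuine convergence/density argument that lets us pass from the finite-sum statement (\ref{eq8}) to the full statement (\ref{eq7}) and to the boundedness of $T$ on all of $\bigoplus C_2$; once this extension is in place, everything else is bookkeeping between $T$, $T^*$, and the definitions.
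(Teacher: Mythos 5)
Your proposal is correct and follows essentially the same route as the paper: Bessel bound and boundedness of $T$ via Proposition \ref{pro1}, surjectivity from HS-completeness by showing $[\overline{T(\bigoplus C_2)}]^{\perp}=\ker T^*=\{0\}$ (the paper writes this out as the explicit computation $0=\langle T(\{\g(f)\}_{j}),f\rangle=\sum_j\Vert\g(f)\Vert^2$), and the converse by restricting (\ref{eq7}) to finitely supported sequences and extracting HS-completeness from the lower bound. The one place you are more careful than the paper is the passage from the finite-sum inequality (\ref{eq8}) to (\ref{eq7}): the paper simply asserts it, whereas your use of the upper bound in (\ref{eq8}) to get unconditional convergence of $\sum_j\g^*(\a)$ and then a limiting argument is exactly the detail needed.
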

\begin{proof}
If $\{\g : j \in J\}$ is a HS-Riesz basis for $\BH$ with bounds $A$ and $B$, then from the definition of HS-Riesz bases, the synthesis operator $T$ is a bounded, injective operator with the closed range $T(\bigoplus C_2)$ and $\Vert T \Vert \leq \sqrt{B}$. So, from Proposition \ref{pro1}, the sequence $\{\g : j \in J\}$ is a HS-Bessel sequence for $\BH$. Let $f \in [T(\bigoplus C_2)]^{\bot}$, then $\{\g(f)\} _{j \in J} \in \bigoplus C_2$. Hence we get
\[ 0= \langle T(\{\g(f)\} _{j \in J}),f \rangle=\bigg\langle \sum_{j \in J} \gtr \g (f),f \bigg\rangle=\sum_{j \in J} [ \g(f), \g(f) ]_{\tau}= \sum_{j \in J} \Vert \g(f) \Vert^2. \]
It implies that $\g(f)=0,$ for all $j \in J$. Since $\{\g: j \in J \}$ is HS-complete, we obtain $f=0,$ which proves $T(\bigoplus C_2)=\BH$. Hence $T$ is a linear homeomorphism. Also, from Equation (\ref{eq8}), for every $\{ \a \}_{j \in J} \in \bigoplus C_2$ we obtain
\[  A \sum_{j \in J } \Vert \a \Vert^2 \leq \Vert T (\{ \a \}_{j \in J}) \Vert^2 \leq  B \sum_{j \in J } \Vert \a \Vert^2. \]

Conversely, If $T$ is a linear homeomorphism satisfying (\ref{eq7}), then by Proposition \ref{pro2}, we find that $\{\g : j \in J\}$ is a HS-frame for $\BH$ with bounds $\Vert T^{\dagger} \Vert^{-2}$ and $\Vert T \Vert^2$. If $\g(f)=0$ for $f \in \BH$ and all $j \in J$, then $\Vert f \Vert^2 \leq \Vert T^{\dagger} \Vert^2 \sum_{j \in J} \Vert \g(f)\Vert^2=0$ implies $f=0.$ Thus $\{\g : j \in J\}$ is a HS-complete. Now by the definition of HS-Riesz bases and the inequalities (\ref{eq7}), we conclude that $\{\g : j \in J\}$ is a HS-Riesz basis for $\BH$ with bounds $A$ and $B$. This completes the proof.
\end{proof} 
\begin{thm}\label{th3}
Let $\{\g : j \in J\} \subseteq \mathcal{L}(\BH, C_2)$. Then the following are equivalent:

$(1)$ The sequence $\{\g : j \in J\}$ is a HS-Riesz basis for $\BH$ with bounds $A$ and $B$.

$(2)$ The sequence $\{\g : j \in J\}$ is a HS-frame for $\BH$ with bounds $A$ and $B$, and $\{\g : j \in J\}$ is an $\bigoplus C_2$-linearly independent family, i.e., if $\sum_{j \in J} \gtr(\a)=0$ for $\{ \a \}_{j \in J} \in \bigoplus C_2$, then $\a=0$ for all $j \in J$.
\end{thm}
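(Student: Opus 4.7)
The plan is to use the synthesis operator $T$ as the bridge, exploiting the characterization of HS-Riesz bases in Lemma \ref{lem4} together with Propositions \ref{pro1} and \ref{pro2}. Throughout, recall the key identity $\sum_{j \in J}\Vert \g(f)\Vert^2 = \langle TT^*f, f\rangle = \Vert T^*f\Vert^2$.

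For $(1) \Rightarrow (2)$: Assuming $\{\g : j \in J\}$ is a HS-Riesz basis with bounds $A,B$, Lemma \ref{lem4} gives that $T$ is a linear homeomorphism satisfying (\ref{eq7}). In particular $T$ is bounded surjective, so Proposition \ref{pro2} yields that $\{\g : j \in J\}$ is a HS-frame. To extract the frame bounds $A, B$, I would use that the right inequality in (\ref{eq7}) gives $\Vert T\Vert^2 \leq B$, hence $\Vert T^*f\Vert^2 \leq B\Vert f\Vert^2$; and the left inequality in (\ref{eq7}) says $T$ is bounded below by $\sqrt{A}$, so by duality its inverse $(T^*)^{-1}$ exists with $\Vert (T^*)^{-1}\Vert \leq 1/\sqrt{A}$, forcing $\Vert T^*f\Vert^2 \geq A\Vert f\Vert^2$. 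The $\bigoplus C_2$-linear independence is immediate: if $\sum_{j} \gtr(\a)=0$, then $T(\{\a\}_{j\in J})=0$, and injectivity of $T$ (coming from its homeomorphism status) forces $\a=0$ for every $j$.

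For $(2) \Rightarrow (1)$: Assuming the HS-frame property with bounds $A,B$, Proposition \ref{pro2} gives that $T$ is well defined, bounded and surjective. The linear independence hypothesis translates exactly into $\ker T = \{0\}$, so $T$ is a bounded bijection, and the open mapping theorem upgrades it to a linear homeomorphism. It only remains to verify the two-sided estimate (\ref{eq7}) in order to invoke Lemma \ref{lem4}. From the frame bounds I get $A\Vert f\Vert^2 \leq \Vert T^*f\Vert^2 \leq B\Vert f\Vert^2$, which is equivalent to $\Vert T\Vert^2 \leq B$ and $\Vert T^{-1}\Vert^2 \leq 1/A$ (using $\Vert T^{-1}\Vert = \Vert(T^*)^{-1}\Vert$). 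The first bound immediately yields $\Vert T(\{\a\}_{j\in J})\Vert^2 \leq B\sum_j \Vert \a\Vert^2$, while writing $\{\a\}_{j\in J} = T^{-1}T(\{\a\}_{j\in J})$ and applying $\Vert T^{-1}\Vert \leq 1/\sqrt{A}$ gives $\sum_j \Vert \a\Vert^2 \leq (1/A)\Vert T(\{\a\}_{j\in J})\Vert^2$, i.e.\ the left half of (\ref{eq7}).

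The argument is almost entirely bookkeeping on the synthesis operator; no delicate analysis is required. The only place that needs care is the passage between bounds on $T$ and bounds on $T^*$ (and on their inverses), but this is a routine consequence of $\Vert T\Vert=\Vert T^*\Vert$ together with the standard fact that an operator is bounded below by $c$ iff its adjoint inverse on the range has norm at most $1/c$. Since $T$ is shown to be a genuine homeomorphism in both directions, these dualities go through without subtleties about closed range or density, so the proof reduces to combining Lemma \ref{lem4} with Propositions \ref{pro1} and \ref{pro2}.
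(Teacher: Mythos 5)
Your proposal is correct and follows essentially the same route as the paper: both directions go through the synthesis operator $T$, using Lemma \ref{lem4} to translate the HS-Riesz basis property into $T$ being a homeomorphism satisfying (\ref{eq7}), Proposition \ref{pro2} for the frame property, and the identity $\Vert T^*f\Vert^2=\sum_{j\in J}\Vert\g(f)\Vert^2$ together with $\Vert T^{-1}\Vert=\Vert(T^*)^{-1}\Vert$ to pass the bounds back and forth. Your write-up is merely a bit more explicit than the paper's about the open mapping theorem and the duality step, but the argument is the same.
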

\begin{proof}
$(1)\Rightarrow (2)$ From Lemma \ref{lem4}, the operator $T$ is a linear homeomorphism with $\Vert T^{\dagger} \Vert^{2}=\Vert T^{-1} \Vert^2 \leq \frac{1}{A}$ and $\Vert T \Vert^2 \leq B$. Thus the operator $T$ is surjective with  $ \Vert T^{\dagger} \Vert^{-2} \geq A$ and 
\begin{equation}\label{eq9}
\mathrm{ker} \; T= \bigg\{ \{\a \}_{j \in J} \in \bigoplus C_2 : T(\{\a \}_{j \in J})=\sum_{j \in J} \gtr (\a)=0 \bigg\}=\{0\}.
\end{equation}
It follows that $\{\g : j \in J\}$ is an $\bigoplus C_2$-linearly independent family. Hence by Proposition \ref{pro2}, the statement $(1)$ implies $(2)$.

$(2)\Rightarrow (1)$ From Proposition \ref{pro2} and (\ref{eq9}), the operator $T$ is a linear homeomorphism with $\Vert T \Vert^2 \leq B$, so is the adjoint $T^*$. Since $\{\g : j \in J\}$ is a HS-frame for $\BH$ with bounds $A$ and $B$, $\Vert T^*(f) \Vert^2= \sum_{j \in J} \Vert \g(f) \Vert^2 \geq A \Vert f \Vert^2$. So, $\Vert T^{-1} \Vert^2=\Vert (T^*)^{-1} \Vert^2 \leq A^{-1}$. Hence for all $\{\a \}_{j \in J} \in \bigoplus C_2$, we have 
\begin{eqnarray*}
&& \Vert T(\{\a \}_{j \in J}) \Vert^2 \leq \Vert T \Vert^2 \Vert \{\a \}_{j \in J} \Vert^2 \leq B \sum_{j \in J} \Vert \a \Vert^2, \\
&& \Vert \{\a \}_{j \in J} \Vert^2 = \Vert T^{-1}T(\{\a \}_{j \in J}) \Vert^2 \leq \Vert T^{-1} \Vert^2 \Vert T(\{\a \}_{j \in J}) \Vert^2 \leq \frac{1}{A} \Vert T(\{\a \}_{j \in J}) \Vert^2.
\end{eqnarray*}
From Lemma \ref{lem4}, the statement $(2)$ implies $(1)$. This completes the proof.
\end{proof}
\section{Approximation of the inverse HS-frame operator}\label{sec3}
In this section, $\BH$ denotes a finite dimensional Hilbert space and let $\{J_n\}_{n=1}^{\infty}$ be a family of finite subsets of $J$ such that $J_1 \subseteq J_2 \subseteq ... \subseteq J_n \nearrow J.$ Given a family $\{\g: j \in J \} \subseteq \mathcal{L}(\BH, C_2)$, we define the space $\BH_n=\mathrm{span} \{ \g^*(C_2): j \in J_n \} $. Then it is easy to see that $\{\g: j \in J_n \}$ is a HS-frame for $\BH_n$. The HS-frame operator for $\{\g: j \in J_n \}$ is 
\[S_n: \BH_n \to \BH_n, \quad S_nf=\sum_{j \in J_n} \g^* \g f.\]
We show that the inverse HS-frame operator $S^{-1}$ can be approximated by operators $S_n^{-1}$ using finite dimensional methods. Here $S_n$ is an operator on the finite dimensional space $\BH_n$. In the following theorem, we generalize Theorem 3.1 in \cite{chr93} from the setting of Hilbert space frames to HS-frames.

\begin{thm}
Let $\{\g: j \in J \}$ be a  HS-frame for $\BH$ with bounds $A$ and $B$. Then for every $f,g \in \BH$ 
\begin{equation}\label{eq11}
\langle g, S_n^{-1} \g^* \g f \rangle \to \langle g, S^{-1} \g^* \g f \rangle \quad \mathrm{as} \quad n \to \infty,
\end{equation}
if and only if for every $j \in J$ and every $f \in \BH$ there exists a constant $c_j$ such that
\begin{equation}\label{eq12}
\| S_n^{-1} \g^* \g f \| \leq c_j, \quad \forall n \mathrm{\; such \; that \;} j \in J_n.
\end{equation}
\end{thm}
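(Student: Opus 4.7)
I would split the proof into the two implications, both relying on the uniform boundedness principle together with a tail estimate for the HS-Bessel sum.

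\textbf{Forward direction.} Fix $j\in J$ and $f\in\BH$. Under the weak convergence hypothesis, the scalar sequence $\{\langle g, S_n^{-1}\g^*\g f\rangle\}_n$ is bounded for every $g\in\BH$. Applying the uniform boundedness principle to the continuous linear functionals $g\mapsto\langle g, S_n^{-1}\g^*\g f\rangle$, the norms $\|S_n^{-1}\g^*\g f\|$ stay uniformly bounded over those indices $n$ with $j\in J_n$, which supplies the constant $c_j$ in~(\ref{eq12}).

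\textbf{Reverse direction.} Fix $j\in J$ and $f\in\BH$, set $u_n := S_n^{-1}\g^*\g f$ and $h := S^{-1}\g^*\g f$; by hypothesis, $\|u_n\|\leq c_j$ whenever $j\in J_n$. The key observation is that, for such $n$, the element $\g^*\g f$ lies in $\BH_n$, and the definition of $S_n$ yields $S_n u_n = \g^*\g f$, so that
\begin{equation*}
(S-S_n)u_n \;=\; \sum_{k\in J\setminus J_n} \mathcal{G}_k^* \mathcal{G}_k u_n.
\end{equation*}
The main estimate comes from Cauchy--Schwarz in $\bigoplus C_2$: for every $g\in\BH$,
\begin{equation*}
|\langle g, (S-S_n)u_n\rangle|^{2} \;\leq\; \Bigl( \sum_{k\in J\setminus J_n} \|\mathcal{G}_k u_n\|_2^{2} \Bigr) \Bigl( \sum_{k\in J\setminus J_n} \|\mathcal{G}_k g\|_2^{2} \Bigr).
\end{equation*}
The first factor is bounded by $B\|u_n\|^2 \leq Bc_j^2$ via the HS-Bessel inequality and the hypothesis; the second factor is the tail of the convergent series $\sum_{k\in J}\|\mathcal{G}_k g\|_2^2 \leq B\|g\|^2$, hence vanishes as $n\to\infty$. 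Therefore $\langle g, Su_n\rangle = \langle g, S_n u_n\rangle + \langle g, (S-S_n)u_n\rangle \to \langle g, \g^*\g f\rangle$ for every $g\in\BH$.

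To finish, I would extract the weak limit by compactness: the bounded sequence $\{u_n\}$ in $\BH$ is weakly relatively compact, and any weak cluster point $v$ satisfies $\langle Sg, v\rangle = \lim_n \langle Sg, u_n\rangle = \langle g, \g^*\g f\rangle$ for every $g$, forcing $Sv = \g^*\g f$ and hence $v = h$. Uniqueness of the weak cluster point upgrades this to $u_n \rightharpoonup h$, which is exactly~(\ref{eq11}). I expect the main obstacle to be the control of $\langle g, (S-S_n)u_n\rangle$: the Cauchy--Schwarz estimate works only because hypothesis~(\ref{eq12}) pins down the first factor, and without that bound the residual sum over $k\in J\setminus J_n$ could spread mass and prevent the identification of the weak limit through the self-adjoint operator $S$.
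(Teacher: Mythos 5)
Your proof is correct and follows essentially the same route as the paper: Banach--Steinhaus applied to the functionals $g\mapsto\langle g,S_n^{-1}\mathcal{G}_j^*\mathcal{G}_j f\rangle$ for the forward implication, and for the converse the identity $(S-S_n)u_n=\sum_{k\in J\setminus J_n}\mathcal{G}_k^*\mathcal{G}_k u_n$ combined with Cauchy--Schwarz, the Bessel bound $B$, the hypothesis $\|u_n\|\le c_j$, and the vanishing tail of a convergent Bessel sum. The only (harmless) difference is your concluding weak-compactness/cluster-point step: the paper instead writes $\Phi_n=S^{-1}(S-S_n)u_n$ and pairs directly with $\mathcal{G}_k S^{-1}g$, and you could likewise skip the compactness argument by substituting $g=Sw$ with $S$ invertible and self-adjoint.
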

\begin{proof}
Assume that (\ref{eq11}) is satisfied. Fix $f \in \BH$ and $j \in J$. For every $n$ with $ j \in J_n$, define 
\[ F_n : \BH \to \BC, \quad F_n(g)=\langle g, S_n^{-1} \g^* \g f \rangle.\]
Then each $F_n$ is continuous, and by (\ref{eq11}) the family $\{F_n\}$ converges pointwise. By Banach Steinhaus theorem there is a constant $c_j$ such that $\|F_n\|=\| S_n^{-1} \g^* \g f \| \leq c_j$ for all $n$.

Conversely, suppose (\ref{eq12}) is satisfied. Let $f \in \BH$. Fix a $j \in J$, and take an $N$ such that $j \in J_n$ for all $n \geq N$. Define
\[ \Phi_n=S_n^{-1} \g^* \g f - S^{-1} \g^* \g f, \quad n\geq N. \]
Then 
\begin{eqnarray*}
 S\Phi_n &= & S S_n^{-1} \g^* \g f - \g^* \g f \\
 & = & S_n S_n^{-1} \g^* \g f + \sum_{j \in J \setminus J_n} \g^* \g S_n^{-1} \g^* \g f - \g^* \g f \\
 &=&  \sum_{j \in J \setminus J_n} \g^* \g S_n^{-1} \g^* \g f,
\end{eqnarray*}
thus \[ \Phi_n=\sum_{j \in J \setminus J_n} S^{-1} \g^* \g S_n^{-1} \g^* \g f. \]
Therefore, for $g \in \BH$, we obtain
\begin{eqnarray*}
|\langle g, \Phi_n \rangle |^2 &=& \bigg| \bigg\langle g, \sum_{j \in J \setminus J_n} S^{-1} \g^* \g S_n^{-1} \g^* \g f \bigg\rangle \bigg|^2 \\
&=& \bigg| \sum_{j \in J \setminus J_n} \langle \g S^{-1} g, \g S_n^{-1} \g^* \g f \rangle \bigg|^2 \\
& \leq & \sum_{j \in J \setminus J_n} \| \g S^{-1} g \|^2 \sum_{j \in J \setminus J_n} \| \g S_n^{-1} \g^* \g f \|^2 \\
& \leq & B \| S_n^{-1} \g^* \g f \|^2 \sum_{j \in J \setminus J_n} \| \g S^{-1} g \|^2 \\
& \leq & B c_j^2 \sum_{j \in J \setminus J_n} \| \g S^{-1} g \|^2 \to 0 \quad \mathrm{as} \quad n \to \infty.
\end{eqnarray*}
Hence $|\langle g, \Phi_n \rangle | \to 0$ as $n \to \infty$, i.e., $\langle g, S_n^{-1} \g^* \g f \rangle \to \langle g, S^{-1} \g^* \g f \rangle$ as $n \to \infty$.
\end{proof}
The orthogonal projection $P_n :\BH \to \BH_n$ is given by $P_nf=\sum_{j \in J_n} S_n^{-1} \g^* \g f$ for all $f \in \BH.$ Since $\{P_n\}_{n=1}^\infty$ is increasing and 
$\overline{(\cup_{n=1}^\infty \BH_n})=\BH$, we have $P_nf \to f=\sum_{j \in J} S^{-1} \g^* \g f$ as $n \to \infty.$ Following Christensen \cite{chr96}, we say that the $projection \; method$ works if (\ref{eq11}) is satisfied for every $f,g \in \BH$ and the $strong \; projection \; method$ works if  
\[ \sum_{j \in J_n}| \langle f, S_n^{-1} \g^* \g f-S^{-1} \g^* \g f  \rangle |^2 \to 0 \quad \mathrm{as} \quad n \to \infty,\]
is satisfied for every $f \in \BH$. Note that the projection method works if the strong projection
method works. Since for any $f \in \BH$, we have
\begin{eqnarray*}
\sum_{j \in J_n}| \langle f, S_n^{-1} \g^* \g f-S^{-1} \g^* \g f  \rangle |^2 & = & \sum_{j \in J_n}| \langle P_n f, S_n^{-1} \g^* \g f \rangle - \langle f, S^{-1} \g^* \g f  \rangle |^2 \\
&=& \sum_{j \in J_n}| \langle \g (S_n^{-1} P_n f - S^{-1} f), \g f \rangle |^2 \\
& \leq & \sum_{j \in J_n} \| \g (S_n^{-1} P_n f - S^{-1} f) \|^2 \cdot \sum_{j \in J_n} \| \g f \|^2 \\
& \leq & B^2 \| S_n^{-1} P_n f - S^{-1} f \|^2 \cdot \| f \|^2,
\end{eqnarray*}
it follows that the strong projection method works if any one of the conditions appearing in Theorem \ref{th4} is satisfied. The result stated in the following can be found in (\cite{chr96}, Theorem 4.5) for Hilbert space frames. We generalize that result to HS-frames as follows.

\begin{thm}\label{th4}
Let $\{\g: j \in J \}$ be a  HS-frame for $\BH$ with the upper bound $B$. Then the following are equivalent:
\begin{enumerate}
\item $\| S_n^{-1} P_n f -S^{-1}f \| \to 0 \quad $ as $ n \to \infty, \; \forall f \in \BH.$
\item $\| (S-S_n)S_n^{-1} P_n f \| \to 0 \quad $ as $ n \to \infty, \; \forall f \in \BH.$
\item $\sum_{j \in J \setminus J_n} \|\g S_n^{-1} P_n f \|^2 \to 0 \quad$ as $ n \to \infty, \; \forall f \in \BH.$
\end{enumerate}
\end{thm}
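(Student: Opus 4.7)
The plan is to extract a single algebraic identity linking all three quantities, then use boundedness of $S$ and $S^{-1}$ to get $(1)\Leftrightarrow(2)$ and the Bessel bound together with Cauchy--Schwarz to get $(2)\Leftrightarrow(3)$.

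The preliminary step I would record is that for every $j\in J_n$, the operator $\g$ annihilates $\BH_n^{\perp}$. Indeed, for $h\in\BH_n^{\perp}$ and any $x\in C_2$, one has $[\g h,x]_{\tau}=\langle h,\g^* x\rangle=0$, and setting $x=\g h$ forces $\|\g h\|_2=0$. Consequently, for every $f\in\BH$,
\[
\sum_{j\in J_n}\g^*\g f \;=\; \sum_{j\in J_n}\g^*\g P_n f \;=\; S_n P_n f,
\]
so in particular $S_n(S_n^{-1}P_n f)=P_n f$. Splitting the full sum for $S$ then yields the \emph{fundamental identity}
\[
(S-S_n)S_n^{-1}P_n f \;=\; S S_n^{-1}P_n f - P_n f \;=\; \sum_{j\in J\setminus J_n}\g^*\g\, S_n^{-1}P_n f.
\]

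For $(1)\Leftrightarrow(2)$ I would use that $S,S^{-1}$ are bounded and $P_n f\to f$. Assuming $(1)$, applying $S$ to $S_n^{-1}P_n f - S^{-1}f\to 0$ gives $S S_n^{-1}P_n f\to f$; combined with $P_n f\to f$, the fundamental identity produces $(2)$. Conversely, $(2)$ reads $S S_n^{-1}P_n f - P_n f\to 0$, so $S S_n^{-1}P_n f\to f$, and applying the bounded operator $S^{-1}$ recovers $(1)$.

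For $(2)\Leftrightarrow(3)$ I would again invoke the fundamental identity. Pairing it against a unit vector $g\in\BH$ and applying Cauchy--Schwarz in $\bigoplus C_2$, together with the Bessel estimate $\sum_{j\in J\setminus J_n}\|\g g\|^2\le B$, gives
\[
\|(S-S_n)S_n^{-1}P_n f\| \;\le\; \sqrt{B}\,\bigg(\sum_{j\in J\setminus J_n}\|\g S_n^{-1}P_n f\|^2\bigg)^{\!1/2},
\]
so $(3)\Rightarrow(2)$. In the reverse direction, the expansion
\[
\sum_{j\in J\setminus J_n}\|\g S_n^{-1}P_n f\|^2 \;=\; \bigl\langle S_n^{-1}P_n f,\,(S-S_n)S_n^{-1}P_n f\bigr\rangle
\]
reduces $(3)$ to controlling $\|S_n^{-1}P_n f\|$ uniformly in $n$; but $(2)\Rightarrow(1)$ already shows the sequence $S_n^{-1}P_n f$ is convergent, hence bounded, so $(3)$ follows. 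The least automatic step in the whole argument is the fundamental identity: everything turns on upgrading $S_n=\sum_{j\in J_n}\g^*\g$ from an operator defined on $\BH_n$ to a formula valid on all of $\BH$ (with a $P_n$ inserted), which is precisely what the annihilation property of $\g$ on $\BH_n^{\perp}$ provides.
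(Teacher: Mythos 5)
Your proposal is correct, and its skeleton coincides with the paper's: the equivalence $(1)\Leftrightarrow(2)$ via boundedness of $S$ and $S^{-1}$ together with $P_nf\to f$, and the implication $(3)\Rightarrow(2)$ via duality, Cauchy--Schwarz and the Bessel bound $\sum_{j\in J\setminus J_n}\Vert\g g\Vert^2\le B$ for unit $g$, are the same computations the paper performs. Two points differ. First, you justify the identity $(S-S_n)S_n^{-1}P_nf=\sum_{j\in J\setminus J_n}\g^*\g S_n^{-1}P_nf$ by first proving that $\g$ annihilates $\BH_n^{\perp}$ for $j\in J_n$ (so that $\sum_{j\in J_n}\g^*\g f=S_nP_nf$ on all of $\BH$); the paper uses this identity silently, so your preliminary step is a welcome tightening rather than a deviation. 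Second, the third leg of the cycle is closed differently: the paper proves $(1)\Rightarrow(3)$ by the triangle inequality
\[
\Big(\sum_{j\in J\setminus J_n}\Vert\g S_n^{-1}P_nf\Vert^2\Big)^{1/2}\le\sqrt{B}\,\Vert S_n^{-1}P_nf-S^{-1}f\Vert+\Big(\sum_{j\in J\setminus J_n}\Vert\g S^{-1}f\Vert^2\Big)^{1/2},
\]
the last term being the tail of a convergent series; you instead prove $(2)\Rightarrow(3)$ from the quadratic-form identity $\sum_{j\in J\setminus J_n}\Vert\g S_n^{-1}P_nf\Vert^2=\langle(S-S_n)S_n^{-1}P_nf,\,S_n^{-1}P_nf\rangle$ together with Cauchy--Schwarz and the boundedness of the sequence $S_n^{-1}P_nf$, which you correctly extract from the already-established implication $(2)\Rightarrow(1)$, so there is no circularity. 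Both routes are sound and of comparable length; the paper's needs the tail estimate for the fixed vector $S^{-1}f$, while yours trades that for the uniform bound on $S_n^{-1}P_nf$.
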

\begin{proof}
$(1) \Leftrightarrow (2)$ Let $f \in \BH$. Then we have
\begin{eqnarray*}
S_n^{-1} P_n f -S^{-1}f &=& S^{-1}(P_nf-f)+S^{-1}(S-S_n)S_n^{-1} P_n f \\
(S-S_n)S_n^{-1} P_n f &=& S(S_n^{-1} P_n f -S^{-1}f)-(P_nf-f) \\
\Rightarrow \quad \quad \| S_n^{-1} P_n f -S^{-1}f \| & \leq & \| S^{-1} \| \cdot \|P_nf-f \|+ \|S^{-1} \| \cdot \|(S-S_n)S_n^{-1} P_n f \| \\
\| (S-S_n)S_n^{-1} P_n f \| & \leq & \| S \| \cdot \|S_n^{-1} P_n f -S^{-1}f\| + \|P_nf-f\|.
\end{eqnarray*}
Since $\|P_nf -f\| \to 0$ as $n \to \infty$, we obtain that $(1)$ and $(2)$ are equivalent.\\
$(1) \Rightarrow (3)$ For every $f \in \BH$, we have
\begin{eqnarray*}
\bigg( \sum_{j \in J \setminus J_n} \|\g S_n^{-1} P_n f \|^2 \bigg)^{\frac{1}{2}} &\leq & \bigg( \sum_{j \in J \setminus J_n} \|\g( S_n^{-1} P_n f -S^{-1}f ) \|^2 \bigg)^{\frac{1}{2}} + \bigg( \sum_{j \in J \setminus J_n} \|\g S^{-1}f \|^2 \bigg)^{\frac{1}{2}} \\
&\leq & \sqrt{B} \| S_n^{-1} P_n f -S^{-1}f \| + \bigg( \sum_{j \in J \setminus J_n} \|\g S^{-1}f \|^2 \bigg)^{\frac{1}{2}}.
\end{eqnarray*}
Since $ \sum_{j \in J \setminus J_n} \|\g S^{-1}f \|^2 \to 0$ as $n \to \infty$, the result follows. \\
$(3) \Rightarrow (2)$ For every $f \in \BH$, we obtain 
\begin{eqnarray*}
\| (S-S_n)S_n^{-1} P_n f \|^2 & =& \sup_{\| g \|=1} | \langle (S-S_n)S_n^{-1} P_n f , g \rangle|^2 \\ 
& =& \sup_{\| g \|=1} \bigg| \bigg\langle \sum_{j \in J \setminus J_n} \g^* \g S_n^{-1} P_n f , g  \bigg\rangle \bigg|^2 \\
& \leq & \sup_{\| g \|=1} \sum_{j \in J \setminus J_n} \| \g S_n^{-1} P_n f \|^2 \cdot \sum_{j \in J \setminus J_n} \| \g  g \|^2 \\
& \leq & B \sum_{j \in J \setminus J_n} \| \g S_n^{-1} P_n f \|^2.
\end{eqnarray*}
Since $\sum_{j \in J \setminus J_n} \| \g S_n^{-1} P_n f \|^2 \to 0$ as $n \to \infty$, we have the desired result.
\end{proof}
Now we derive a general method for approximation of the inverse HS-frame operator. We first establish the following result, which generalizes Lemmas 3.1 and 3.2 in \cite{cas2000} to HS-frames in a more general form.

\begin{prop}\label{pro3}
Let $\{\g: j \in J \}$ be a HS-frame for $\BH$ with bounds $A$ and $B$. Let $\lambda > 1$ be a scalar. Then for any $n \in \BN$ there exists a number $m(n)$ such that the following holds:
\begin{enumerate}
\item $\frac{A}{\lambda} \| f \|^2 \leq \sum_{j \in J_{n+m(n)}} \| \g (f)\|^2$ for all $f \in \BH_n.$ 
\item $\{\g P_n\}_{j \in J_{n+m(n)}}$ is a HS-frame for $\BH_n$ with bounds $A/ \lambda$ and $B$. Moreover, the HS-frame operator for $\{\g P_n\}_{j \in J_{n+m(n)}}$ is $P_n S_{n+m(n)}: \BH_n \to \BH_n$, with 
\[ \| P_n S_{n+m(n)} \| \leq B, \quad \mathrm{and} \quad \| (P_n S_{n+m(n)})^{-1} \| \leq  \frac{\lambda}{A}. \]
\end{enumerate}
\end{prop}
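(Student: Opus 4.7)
My plan for (1) is to exploit that the finite-dimensionality of $\BH$ (the standing assumption at the start of this section) makes the unit sphere of $\BH_n$ compact, so that pointwise convergence of the partial Bessel sums can be upgraded to uniform convergence via Dini's theorem. Concretely, define $\phi_k : \BH_n \to \BR$ by $\phi_k(f) = \sum_{j \in J_k} \|\g(f)\|^2$. These maps are continuous, increase monotonically in $k$, and converge pointwise to the continuous function $\phi(f) = \langle Sf, f \rangle \geq A\|f\|^2$. Dini's theorem then supplies a single index $k_0$ such that $\phi(f) - \phi_k(f) < (1 - 1/\lambda)A$ holds for every $k \geq k_0$ uniformly over unit vectors $f \in \BH_n$. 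Choosing $m(n)$ with $n + m(n) \geq k_0$ gives $\phi_{n+m(n)}(f) \geq A/\lambda$ on the unit sphere, and homogeneity extends this to all of $\BH_n$, which is exactly (1).

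For (2), the upper bound is immediate: since $P_n f = f$ for $f \in \BH_n$, one has $\sum_{j \in J_{n+m(n)}} \|\g P_n f\|^2 \leq \sum_{j \in J} \|\g f\|^2 \leq B \|f\|^2$, and the lower bound is exactly (1), so $\{\g P_n\}_{j \in J_{n+m(n)}}$ is a HS-frame for $\BH_n$ with bounds $A/\lambda$ and $B$. To identify its HS-frame operator I would compute, using $P_n^* = P_n$ and $P_n f = f$ on $\BH_n$,
\[ \sum_{j \in J_{n+m(n)}} (\g P_n)^* (\g P_n) f \;=\; P_n \sum_{j \in J_{n+m(n)}} \gtr \g f \;=\; P_n S_{n+m(n)} f, \quad f \in \BH_n, \]
where the last equality uses $f \in \BH_n \subseteq \BH_{n+m(n)}$ so that $\sum_{j \in J_{n+m(n)}} \gtr \g f = S_{n+m(n)} f$. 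The norm bounds $\|P_n S_{n+m(n)}\| \leq B$ and $\|(P_n S_{n+m(n)})^{-1}\| \leq \lambda/A$ then follow from the standard fact, recorded earlier in the paper, that a HS-frame operator with bounds $A'$ and $B'$ satisfies $A' I \leq S \leq B' I$, applied to the HS-frame $\{\g P_n\}_{j \in J_{n+m(n)}}$ for $\BH_n$ with bounds $A/\lambda$ and $B$.

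I expect the main obstacle to be the uniformity in step (1): pointwise convergence of the partial Bessel sums is immediate from the definition of a HS-frame, but extracting a single $m(n)$ that works for every unit vector in $\BH_n$ simultaneously is the real content of the proposition. Dini's theorem together with compactness of the unit sphere of $\BH_n$ (which rests on the finite-dimensionality assumption) is precisely what supplies this uniformity; without it, one only gets a $k$ depending on each individual $f$. Once (1) is in hand, the identification of the frame operator and the extraction of the two norm bounds in (2) are routine bookkeeping.
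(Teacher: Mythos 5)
Your proof is correct, and part (2) is essentially the paper's own argument (same identification $\sum_{j\in J_{n+m(n)}}(\g P_n)^*(\g P_n)f=P_nS_{n+m(n)}f$ via $P_nf=f$ on $\BH_n$, same extraction of the norm bounds from $\frac{A}{\lambda}I\leq P_nS_{n+m(n)}\leq BI$). Where you genuinely diverge is part (1): you correctly identify uniformity over the unit sphere of $\BH_n$ as the crux and obtain it from Dini's theorem, using that the partial sums $\phi_k(f)=\sum_{j\in J_k}\|\g(f)\|^2$ are continuous, monotone in $k$, and converge pointwise to the continuous function $\langle Sf,f\rangle\geq A\|f\|^2$. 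The paper instead proves the uniformity by hand: it covers the (compact) unit sphere of $\BH_n$ by finitely many balls $B(g_k,\varepsilon)$, chooses $m(n)$ so that $\sum_{j\in J_{n+m(n)}}\|\g(g_k)\|^2\geq A/\mu$ for the finitely many centers (with an intermediate $1<\mu<\lambda$), and then transfers the bound from $g_k$ to a nearby $f$ via the Bessel-bound Lipschitz estimate $\bigl(\sum_{j\in J_{n+m(n)}}\|\g(f-g_k)\|^2\bigr)^{1/2}\leq\sqrt{B}\,\varepsilon$, arranging $\sqrt{A/\mu}-\sqrt{B}\varepsilon\geq\sqrt{A/\lambda}$. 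The two mechanisms are different ways of upgrading pointwise to uniform convergence on a compact set: Dini exploits monotonicity and is shorter and cleaner, while the $\varepsilon$-net argument exploits the uniform equicontinuity supplied by the upper frame bound and is more constructive, making explicit how $m(n)$ is determined from a finite net. Both hinge on the finite-dimensionality of $\BH_n$, so neither extends to the infinite-dimensional setting; your version is a perfectly valid, and arguably tidier, substitute for the paper's.
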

\begin{proof} (1)
Let $n \in \BN$ and $\lambda > \mu >1$. Choose $\varepsilon > 0$ such that $\sqrt{A/\mu} -\sqrt{B}\varepsilon \geq \sqrt{A/\lambda}$. Since $\{ f \in \BH_n: \| f \|=1\}$ is compact, there exist a finite set of elements $g_k \in \BH_n$ with $\| g_k \|=1,$ for all $k$ such that the balls $B(g_k, \varepsilon)=\{f \in \BH_n: \|f-g_k\| \leq \varepsilon\}$ cover the set $\{ f \in \BH_n: \| f \|=1\}$. Since $\{\g: j \in J \}$ is a HS-frame for $\BH$, we have $A \leq \sum_{j \in J} \| \g (g_k) \|^2$ for all $k$. Hence we can choose $m(n)$ such that 
\[ \frac{A}{\mu} \leq \sum_{j \in J_{n+m(n)}} \| \g (g_k)\|^2, \quad \forall k. \]
Now let $f \in \BH_n$ with $\| f \|=1$. Choose $k$ such that $f \in B(g_k, \varepsilon)$. Therefore 
\begin{eqnarray*}
\bigg( \sum_{j \in J_{n+m(n)}} \| \g (f)\|^2 \bigg)^{\frac{1}{2}} & \geq & \bigg( \sum_{j \in J_{n+m(n)}} \| \g (g_k)\|^2 \bigg)^{\frac{1}{2}} -\bigg( \sum_{j \in J_{n+m(n)}} \| \g (f-g_k)\|^2 \bigg)^{\frac{1}{2}} \\
& \geq & \sqrt{A/ \mu} - \sqrt{B} \| f- g_k \| \geq \sqrt{A / \mu} - \sqrt{B} \varepsilon \geq \sqrt{A / \lambda}.
\end{eqnarray*} 
(2) Since $P_n f=f$ for all $f \in \BH_n$, from (1) we get
\[ \frac{A}{\lambda} \| f \|^2 \leq \sum_{j \in J_{n+m(n)}} \| \g (f)\|^2 = \sum_{j \in J_{n+m(n)}} \| \g P_n(f)\|^2 \leq \sum_{j \in J} \| \g P_n(f)\|^2 \leq B \| f \|^2, \; \forall f \in \BH_n. \]
Hence $\{\g P_n\}_{j \in J_{n+m(n)}}$ is a HS-frame for $\BH_n$ with bounds $A/ \lambda$ and $B$. Moreover, 
\[ P_n S_{n+m(n)} f= \sum_{j \in J_{n+m(n)}} P_n \g^* \g P_n f = \sum_{j \in J_{n+m(n)}} ( \g P_n)^* (\g P_n) f, \;\; \forall f \in \BH_n. \]
Therefore $P_n S_{n+m(n)}$ is the HS-frame operator for $\{\g P_n\}_{j \in J_{n+m(n)}}$. Now the norm estimates follow from the fact that
\[ B = \sup_{\| f \|=1} \sum_{j \in J} \| \g P_n(f)\|^2 =  \sup_{\| f \|=1} \langle P_n S_{n+m(n)}f,f \rangle = \| P_n S_{n+m(n)} \|,\]
and $\| (P_n S_{n+m(n)})^{-1} \| =  \lambda /A$, which follows from the properties of dual HS-frames.
\end{proof}

\begin{rmk}
If we consider $\lambda=2$ in Proposition \ref{pro3}, then we obtain the similar inequalities as in Lemmas 3.1 and 3.2 in \cite{cas2000}.
\end{rmk}
Now we are ready to prove that $S^{-1}$ can be approximated arbitrarily closely in the strong operator topology using the operators $(P_n S_{n+m(n)})^{-1} P_n$. A similar result for Hilbert space frames can be found in (\cite{cas2000}, Theorem 3.3). We use Proposition \ref{pro3} and give a similar proof for HS-frames as follows.

\begin{thm}
Let $\{\g: j \in J \}$ be a HS-frame for $\BH$ with bounds $A$ and $B$. For a fix $\lambda > 1$, and for any $n \in \BN$, choose $m(n)$ such that for all $f \in \BH_n$ \[ \frac{A}{\lambda}\| f \|^2 \leq \sum_{j \in J_{n+m(n)}} \| \g (f) \|^2. \]
Then $(P_n S_{n+m(n)})^{-1} P_n f \to S^{-1} f$ as $n \to \infty$, for all $f \in \BH$.
\end{thm}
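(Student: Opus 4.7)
The strategy is to first verify the convergence for $f$ of the form $f = Sg$ with $g$ lying in some $\BH_N$, and then extend to arbitrary $f \in \BH$ by density, using the uniform bound $\|(P_n S_{n+m(n)})^{-1}\| \leq \lambda/A$ furnished by Proposition~\ref{pro3}(2). The dense subspace needed is $\bigcup_N \BH_N$, which is dense in $\BH$ because $P_n f \to f$ for every $f \in \BH$ (as already noted in the excerpt).

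First I would establish the following tail estimate: for each $g \in \bigcup_{N} \BH_N$,
\[ \| P_n S g - P_n S_{n+m(n)} g \| \longrightarrow 0 \quad \text{as } n \to \infty. \]
I would prove this by writing $P_n S g - P_n S_{n+m(n)} g = \sum_{j \in J \setminus J_{n+m(n)}} P_n \g^* \g(g)$, pairing with an arbitrary unit vector $h \in \BH$, and applying Cauchy--Schwarz together with the HS-Bessel bound to get
\[ |\langle P_n S g - P_n S_{n+m(n)} g, h\rangle| = \biggl| \sum_{j \in J \setminus J_{n+m(n)}} [\g(g), \g(P_n h)]_\tau \biggr| \leq \sqrt{B}\, \biggl(\sum_{j \in J \setminus J_{n+m(n)}} \|\g(g)\|^2 \biggr)^{1/2}, \]
and the right-hand side is a Bessel tail that vanishes.

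Next, for $g \in \BH_N$ and any $n \geq N$, the identities $P_n g = g$ and $(P_n S_{n+m(n)})^{-1} P_n S_{n+m(n)} g = g$ combine to give
\[ (P_n S_{n+m(n)})^{-1} P_n (Sg) - g = (P_n S_{n+m(n)})^{-1} \bigl[ P_n S g - P_n S_{n+m(n)} g \bigr], \]
so pairing $\|(P_n S_{n+m(n)})^{-1}\| \leq \lambda/A$ with the tail estimate above yields $(P_n S_{n+m(n)})^{-1} P_n (Sg) \to g$ for every $g \in \bigcup_N \BH_N$. This establishes the conclusion on the dense subspace $S\bigl(\bigcup_N \BH_N\bigr)$ of $\BH$.

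Finally, for general $f \in \BH$, I would set $g = S^{-1}f$ and pick $g' \in \bigcup_N \BH_N$ with $\|g-g'\|$ arbitrarily small, then put $f' = Sg'$. Using the uniform estimates $\|(P_n S_{n+m(n)})^{-1} P_n\| \leq \lambda/A$ and $\|S\| \leq B$, a three-term triangle inequality yields
\[ \| (P_n S_{n+m(n)})^{-1} P_n f - S^{-1}f \| \leq \Bigl( \tfrac{\lambda B}{A} + 1 \Bigr) \|g - g'\| + \| (P_n S_{n+m(n)})^{-1} P_n f' - g' \|, \]
where the second term vanishes as $n \to \infty$ by the previous step, finishing the proof. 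The main obstacle of the argument is actually already absorbed into Proposition~\ref{pro3}(2): without the uniform-in-$n$ control $\|(P_n S_{n+m(n)})^{-1}\| \leq \lambda/A$, neither the tail estimate nor the density argument would close, so the careful choice of the index sequence $m(n)$ is the crux of the whole scheme.
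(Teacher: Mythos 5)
Your argument is correct, and it is organized genuinely differently from the paper's proof, even though both rest on the same two ingredients: the uniform bound $\|(P_nS_{n+m(n)})^{-1}\|\le \lambda/A$ from Proposition~\ref{pro3}(2) and the vanishing of Bessel tails $\sum_{j\in J\setminus J_{n+m(n)}}\|\g(h)\|^2\to 0$. The paper works directly with an arbitrary $f\in\BH$: it decomposes
\[
(P_nS_{n+m(n)})^{-1}P_nf-S^{-1}f=\bigl[(P_nS_{n+m(n)})^{-1}P_nf-P_nS^{-1}f\bigr]+(P_n-I)S^{-1}f,
\]
bounds the bracketed term by $\tfrac{\lambda}{A}\,\|S_{n+m(n)}P_nS^{-1}f-f\|$, and splits that into the two vanishing pieces $B\|(P_n-I)S^{-1}f\|$ and $\bigl\|\sum_{j\in J\setminus J_{n+m(n)}}\gtr\g S^{-1}f\bigr\|$. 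You instead first prove the convergence on the dense subspace $S\bigl(\bigcup_N\BH_N\bigr)$, where the exact algebraic identity $(P_nS_{n+m(n)})^{-1}P_nS_{n+m(n)}g=g$ (valid for $g\in\BH_n$) reduces everything to a single tail estimate, and then extend to all of $\BH$ by an $\varepsilon/3$ argument using the uniform control of $(P_nS_{n+m(n)})^{-1}P_n$. Neither route is more general; the paper's is slightly shorter because the term $S_{n+m(n)}(P_n-I)S^{-1}f$ silently absorbs the approximation step you carry out separately via density, whereas your version isolates more transparently where each hypothesis enters (the uniform inverse bound powers the density step, the Bessel tail powers the core convergence). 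One small remark: your tail estimate actually holds for every $g\in\BH$, not only for $g\in\bigcup_N\BH_N$; the restriction to the union is needed only for the identity $(P_nS_{n+m(n)})^{-1}P_nS_{n+m(n)}g=g$.
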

\begin{proof}
Let $f \in \BH$. Since $(P_n -I) S^{-1} f \to 0$ as $n \to \infty$ and
\[(P_n S_{n+m(n)})^{-1} P_n f - S^{-1} f = (P_n S_{n+m(n)})^{-1} P_n f -P_n S^{-1} f + (P_n -I) S^{-1} f, \]
it is enough to show that $(P_n S_{n+m(n)})^{-1} P_n f - P_n S^{-1} f \to 0 $ as $n \to \infty.$ Using Proposition \ref{pro3}, we obtain
\begin{eqnarray*}
&& \| (P_n S_{n+m(n)})^{-1} P_n f - P_n S^{-1} f \| \\ 
& \leq & \| (P_n S_{n+m(n)})^{-1} \| \cdot \| P_n f - P_n S_{n+m(n)}P_n S^{-1} f \| \\
& \leq & \frac{\lambda}{A} \| S_{n+m(n)}P_n S^{-1} f -f \|  \\
& \leq & \frac{\lambda}{A} \bigg( \| S_{n+m(n)}(P_n-I) S^{-1} f \| + \| S_{n+m(n)} S^{-1} f -f \| \bigg) \\
& \leq & \frac{\lambda}{A} \bigg( B \| (P_n-I) S^{-1} f \| + \bigg\| \sum_{j \in J \setminus J_{n+m(n)}} \g^* \g S^{-1} f \bigg\| \bigg) \to 0 \quad \mathrm{as} \quad n \to \infty. 
\end{eqnarray*}
Hence, we have the desired result.
\end{proof}
Finally we generalize Theorem 4 in \cite{cass97} from the setting of Hilbert space frames to HS-frames and we include a similar proof.

\begin{thm}
Let $\{\g: j \in J \}$ be a HS-frame for $\BH$.  Then the following are equivalent:
\begin{enumerate}
\item $\sum_{j \in  J_n}S_n^{-1} \g^*(\a) \to \sum_{j \in  J}S^{-1} \g^*(\a)$ as $n \to \infty$, for all $\{\a \}_{j \in J} \in \bigoplus C_2$.
\item $S_n^{-1}\sum_{j \in J_n}\g^*(\a) \to 0$ as $n \to \infty$ for all  $\{\a \}_{j \in J} \in \bigoplus C_2$ with $\sum_{j \in J}\g^*(\a)=0$.
\end{enumerate}
\end{thm}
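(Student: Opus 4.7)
The plan is to exploit the linearity identity $S_n^{-1}\sum_{j\in J_n}\g^*(\a)=\sum_{j\in J_n}S_n^{-1}\g^*(\a)$, so that the two statements differ only in the class of sequences against which convergence is tested: (1) tests on all of $\bigoplus C_2$, while (2) tests only on the kernel of the synthesis operator $T$. The direction $(1)\Rightarrow(2)$ is then almost immediate: if $\sum_{j\in J}\g^*(\a)=0$, then by Lemma \ref{lem1} the series converges unconditionally, so boundedness of $S^{-1}$ yields $\sum_{j\in J}S^{-1}\g^*(\a)=S^{-1}\sum_{j\in J}\g^*(\a)=0$. Hypothesis (1) now forces $\sum_{j\in J_n}S_n^{-1}\g^*(\a)\to 0$, and linearity of $S_n^{-1}$ converts this into the statement of (2).

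For the converse $(2)\Rightarrow(1)$, I would fix $\{\a\}\in\bigoplus C_2$, set $f:=T(\{\a\})=\sum_{j\in J}\g^*(\a)$, and compare $\{\a\}$ to a canonical sequence $\{\b\}$ having the same image under $T$ but whose partial sums manifestly approximate $S^{-1}f$. The natural choice is $\b:=\g S^{-1}f$. Then $\{\b\}\in\bigoplus C_2$, since $\sum_{j\in J}\|\b\|^2\le B\|S^{-1}f\|^2$ by the upper HS-frame bound, and $T(\{\b\})=\sum_{j\in J}\g^*\g S^{-1}f=SS^{-1}f=f$. Hence the difference $\{\a-\b\}$ lies in $\ker T$.

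Applying (2) to $\{\a-\b\}$ gives $\sum_{j\in J_n}S_n^{-1}\g^*(\a)-\sum_{j\in J_n}S_n^{-1}\g^*(\b)\to 0$. Using the identity $P_nh=\sum_{j\in J_n}S_n^{-1}\g^*\g h$ (stated in the paragraph preceding Theorem \ref{th4}) with $h=S^{-1}f$, the second sum equals $P_nS^{-1}f$, and $P_nS^{-1}f\to S^{-1}f$ because the projections $P_n$ increase to $I$ in the strong operator topology. Adding the two pieces yields $\sum_{j\in J_n}S_n^{-1}\g^*(\a)\to S^{-1}f=\sum_{j\in J}S^{-1}\g^*(\a)$, which is (1).

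The only delicate point I expect is the identity $P_nh=\sum_{j\in J_n}S_n^{-1}\g^*\g h$ for arbitrary $h\in\BH$ rather than merely for $h\in\BH_n$; this rests on the inclusion $\BH_n^{\perp}\subseteq\bigcap_{j\in J_n}\ker\g$ (equivalently $\g h=\g P_nh$ for $j\in J_n$), after which the HS-frame reconstruction on $\BH_n$ finishes the job. Apart from this, everything reduces to linearity of $S_n^{-1}$, the triangle inequality, and the Bessel bound for $\{\g\}$.
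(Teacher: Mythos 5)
Your proof is correct and follows essentially the same route as the paper: both rest on the decomposition $\bigoplus C_2=\mathrm{ran}\,T^*\oplus\ker T$, with the range component's partial sums identified as $P_n g\to g$ and the remainder handled by condition (2); your $\{\mathcal{B}_j\}=\{\mathcal{G}_jS^{-1}f\}$ is exactly the paper's $\{\mathcal{G}_j(g)\}$ with $g=S^{-1}f$, just constructed explicitly rather than invoked abstractly. The ``delicate point'' you flag, $P_nh=\sum_{j\in J_n}S_n^{-1}\mathcal{G}_j^*\mathcal{G}_jh$ for all $h\in\BH$, is asserted in the paragraph preceding Theorem \ref{th4}, and your justification of it is the right one.
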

\begin{proof} 
Let $T$ be the synthesis operator for $\{\g: j \in J \}$. Since $\bigoplus C_2$ is the orthogonal sum of the range of $T^*$ and the kernel of $T$, we can write any  $\{\a \}_{j \in J} \in \bigoplus C_2$ as $\{\a \}_{j \in J}=\{\g(g) \}_{j \in J} + \{\mathcal{F}_j \}_{j \in J}$ for some $g \in \BH$ and $\{\mathcal{F}_j \}_{j \in J} \in \mathrm{Ker \;} T .$ Then
\[\sum_{j \in  J_n}S_n^{-1} \g^*(\a)=\sum_{j \in  J_n}S_n^{-1} \g^* \g(g)+\sum_{j \in  J_n}S_n^{-1} \g^*(\mathcal{F}_j) = P_n g+ S_n^{-1} \sum_{j \in  J_n} \g^*(\mathcal{F}_j). \]
Also, we have
\[ \sum_{j \in  J}S^{-1} \g^*(\a)=\sum_{j \in  J}S^{-1} \g^* \g (g)+\sum_{j \in  J}S^{-1} \g^*(\mathcal{F}_j) =g+ S^{-1} \sum_{j \in  J} \g^*(\mathcal{F}_j)=g, \]
from which (1) and (2) are equivalent.
\end{proof}

\section{Stability of HS-frames}\label{sec4}
In this section, we study the stability of HS-frames. Before we prove the main results of this section, we first need the following lemma.
\begin{lem}\label{lem3}
\cite{cas97} Let $\mathcal{X}$ be a Banach space, $U: \mathcal{X} \rightarrow \mathcal{X}$ is a linear operator. If there exist constants $\lambda_1, \lambda_2 \in [0,1)$ such that \[ \Vert Ux -x \Vert \leq \lambda_1 \Vert x \Vert+\lambda_2 \Vert Ux \Vert, \; \forall x \in \mathcal{X}. \]
Then $U$ is a bounded invertible operator on $\mathcal{X}$, and 
\[ \frac{1-\lambda_1}{1+\lambda_2} \Vert x \Vert \leq \Vert Ux \Vert \leq \frac{1+\lambda_1}{1-\lambda_2} \Vert x \Vert, \;\frac{1-\lambda_2}{1+\lambda_1} \Vert x \Vert \leq \Vert U^{-1}x \Vert \leq \frac{1+\lambda_2}{1-\lambda_1} \Vert x \Vert, \; \forall x \in \mathcal{X}. \]
\end{lem}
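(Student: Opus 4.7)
The plan is to manipulate the triangle inequality on the hypothesis to extract two-sided bounds on $\|Ux\|$, use these to deduce invertibility of $U$, and then exploit the observation that the hypothesis is self-dual under $x \mapsto U^{-1}x$ to recover the analogous bounds on $\|U^{-1}y\|$; the real work hides in the invertibility step.

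First, I would apply the triangle inequality twice to the hypothesis. On one hand, $\|Ux\| \leq \|x\| + \|Ux - x\| \leq (1+\lambda_1)\|x\| + \lambda_2 \|Ux\|$, which, since $\lambda_2 < 1$, rearranges to $\|Ux\| \leq \frac{1+\lambda_1}{1-\lambda_2}\|x\|$ (the upper bound, giving boundedness of $U$). On the other hand, $\|x\| \leq \|Ux\| + \|Ux - x\| \leq (1+\lambda_2)\|Ux\| + \lambda_1 \|x\|$, which, since $\lambda_1 < 1$, rearranges to $\|Ux\| \geq \frac{1-\lambda_1}{1+\lambda_2}\|x\|$ (the lower bound, showing $U$ is bounded below). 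Together these give that $U$ is injective with closed range.

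Second, I would show that $U$ is onto; this is the step I expect to be the main obstacle. The natural route is to assume $U(\mathcal{X})$ is a proper closed subspace and derive a contradiction via Hahn--Banach: any non-zero $\phi \in \mathcal{X}^*$ annihilating $U(\mathcal{X})$ satisfies $\phi(Ux) = 0$, so $|\phi(x)| = |\phi(x - Ux)| \leq \|\phi\|\,\|Ux - x\| \leq \|\phi\|(\lambda_1 \|x\| + \lambda_2 \|Ux\|)$, and feeding in the upper bound on $\|Ux\|$ from the previous step gives $\|\phi\| \leq \|\phi\| \cdot \frac{\lambda_1+\lambda_2}{1-\lambda_2}$. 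The naive coefficient closes directly only in the sub-regime $\lambda_1 + 2\lambda_2 < 1$; to reach the full range $\lambda_1, \lambda_2 \in [0,1)$ I would either pick $\phi$ as a near-extremal element of $U(\mathcal{X})^{\perp}$ on which a localized sharper form of the bound holds, or iterate the hypothesis along $(I-U)^{k}x$ to tighten the geometric factor.

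Third, once invertibility of $U$ is in hand, I would substitute $x = U^{-1}y$ into the hypothesis to obtain $\|y - U^{-1}y\| \leq \lambda_1 \|U^{-1}y\| + \lambda_2 \|y\|$, which is exactly the original inequality with $U$ replaced by $U^{-1}$ and the roles of $\lambda_1$ and $\lambda_2$ swapped. Reapplying the step-one triangle manipulations to this new inequality yields $\frac{1-\lambda_2}{1+\lambda_1}\|y\| \leq \|U^{-1}y\| \leq \frac{1+\lambda_2}{1-\lambda_1}\|y\|$, which together with step one delivers all four estimates in the statement.
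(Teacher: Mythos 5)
The paper does not actually prove this lemma --- it is quoted verbatim from Casazza--Christensen \cite{cas97} --- so there is no internal proof to compare against; I will assess your plan on its own terms. Your first and third steps are correct and complete: the two triangle-inequality manipulations give $\frac{1-\lambda_1}{1+\lambda_2}\|x\| \le \|Ux\| \le \frac{1+\lambda_1}{1-\lambda_2}\|x\|$, hence boundedness, injectivity and closed range; and once invertibility is known, substituting $x = U^{-1}y$ turns the hypothesis into $\|y - U^{-1}y\| \le \lambda_2\|y\| + \lambda_1\|U^{-1}y\|$, i.e.\ the same inequality with $\lambda_1$ and $\lambda_2$ swapped, which delivers both bounds for $U^{-1}$.

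The genuine gap is exactly where you flag it: surjectivity. The annihilator argument yields only $\|\phi\| \le \frac{\lambda_1+\lambda_2}{1-\lambda_2}\|\phi\|$, and neither proposed repair closes it. Choosing a ``near-extremal'' $\phi$ cannot help, because the obstruction is not the choice of functional but the fact that $\|Ux\|$ may genuinely be of size $\frac{1+\lambda_1}{1-\lambda_2}\|x\|$ on the vectors where $|\phi(x)|$ is near $\|\phi\|\|x\|$; and iterating along $(I-U)^k x$ reproduces the same constant, since the hypothesis only gives $\|(I-U)^k\| \le \bigl(\frac{\lambda_1+\lambda_2}{1-\lambda_2}\bigr)^k$, which tends to $0$ precisely in the sub-regime $\lambda_1+2\lambda_2<1$ you already cover. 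The standard way to finish (and, to my recollection, the route in \cite{cas97}) is a continuity argument: for $t\in[0,1]$ set $U_t=(1-t)I+tU$ and average the two lower bounds $\|U_tx\|\ge\|x\|-t\|Ux-x\|$ and $\|U_tx\|\ge\|Ux\|-(1-t)\|Ux-x\|$ to get the $t$-independent estimate $\|U_tx\|\ge\tfrac12\bigl(\|x\|+\|Ux\|-\|Ux-x\|\bigr)\ge\tfrac{1-\lambda_1}{2}\|x\|$. Since $U-I$ is bounded and every $U_t$ is bounded below by the fixed constant $c=\tfrac{1-\lambda_1}{2}$, the set of $t$ with $U_t$ invertible contains $0$ and is open with the uniform margin $c/\|U-I\|$ (Neumann series around any invertible $U_{t_0}$), so finitely many steps reach $t=1$ and $U=U_1$ is invertible. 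Until your step two is replaced by an argument of this kind, the proof is incomplete on the full range $\lambda_1,\lambda_2\in[0,1)$.
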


The following is a fundamental result in the study of the stability of frames.
\begin{prop}
$($\cite{cas97}, $\mathrm{Theorem \;2})$
Let $\{f_i \}_{i=1}^\infty$ be a frame for some Hilbert space $\BH$ with bounds $A, B$. Let $\{g_i \}_{i=1}^\infty \subseteq \BH$ and assume that there exist constants $\lambda_1, \lambda_2, \mu \geq 0$ such that $\max(\lambda_1+\frac{\mu}{\sqrt{A}}, \lambda_2)<1$ and
\begin{equation}\label{eq1}
\bigg\Vert \sum_{i=1}^n c_i (f_i-g_i) \bigg\Vert \leq \lambda_1 \bigg\Vert \sum_{i=1}^n c_i f_i \bigg\Vert +\lambda_2 \bigg\Vert \sum_{i=1}^n c_i g_i \bigg\Vert + \mu \bigg[ \sum_{i=1}^n |c_i|^2 \bigg]^{1/2} 
\end{equation} 
for all $c_1,...,c_n(n \in \BN).$ Then $\{g_i \}_{i=1}^\infty$ is a frame for $\BH$ with bounds
\[ A \left( 1 - \frac{\lambda_1+\lambda_2+\frac{\mu}{\sqrt{A}}}{1+\lambda_2} \right)^2, \;\; B \left( 1+ \frac{\lambda_1+\lambda_2+\frac{\mu}{\sqrt{B}}}{1-\lambda_2} \right)^2.\]
\end{prop}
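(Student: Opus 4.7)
The plan is to establish the Bessel (upper) and lower frame bounds for $\{g_i\}$ separately: the upper one by a direct triangle-inequality estimate on the synthesis operators, and the lower one by constructing an auxiliary invertible operator on $\BH$ via the canonical dual frame and then applying Lemma~\ref{lem3}.

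First I would let $T, U \colon \ell^2 \to \BH$ denote the synthesis operators, $T(\{c_i\}) = \sum c_i f_i$ (bounded with $\|T\| \leq \sqrt{B}$) and $U(\{c_i\}) = \sum c_i g_i$ (initially defined on finitely supported sequences). Rewriting the hypothesis as $\|Tc - Uc\| \leq \lambda_1\|Tc\| + \lambda_2\|Uc\| + \mu\|c\|_2$ and applying the triangle inequality gives
\[
(1-\lambda_2)\|Uc\| \leq (1+\lambda_1)\sqrt{B}\,\|c\|_2 + \mu\|c\|_2,
\]
so $U$ extends continuously to all of $\ell^2$ with $\|U\| \leq \sqrt{B}\bigl(1 + (\lambda_1+\lambda_2+\mu/\sqrt{B})/(1-\lambda_2)\bigr)$. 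Dualizing yields $\sum_i |\langle f, g_i\rangle|^2 = \|U^*f\|^2 \leq \|U\|^2 \|f\|^2$, giving the Bessel upper bound in the form stated.

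For the lower bound, let $S = TT^*$ be the frame operator of $\{f_i\}$ and $\tilde{f}_i = S^{-1}f_i$ the canonical dual, which is a frame with upper bound $1/A$. Define
\[
V \colon \BH \to \BH, \qquad Vf = \sum_i \langle f, \tilde{f}_i\rangle\, g_i.
\]
Since $f = \sum \langle f, \tilde{f}_i\rangle f_i$, applying the perturbation hypothesis to the $\ell^2$ coefficient sequence $c_i = \langle f, \tilde{f}_i\rangle$ (whose $\ell^2$-norm is at most $\|f\|/\sqrt{A}$) and passing to the limit using boundedness of $U$ yields
\[
\|f - Vf\| \leq \Bigl(\lambda_1 + \frac{\mu}{\sqrt{A}}\Bigr)\|f\| + \lambda_2\,\|Vf\|.
\]
Since the hypothesis guarantees $\lambda_1 + \mu/\sqrt{A} < 1$ and $\lambda_2 < 1$, Lemma~\ref{lem3} makes $V$ a bounded invertible operator on $\BH$ with $\|V^{-1}\| \leq (1+\lambda_2)/(1-\lambda_1-\mu/\sqrt{A})$.

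To extract the lower frame bound, I would use the representation $f = V(V^{-1}f) = \sum_i \langle V^{-1}f, \tilde{f}_i\rangle g_i$. Taking the inner product with $f$ and applying Cauchy--Schwarz together with the Bessel estimate $\sum_i |\langle h, \tilde{f}_i\rangle|^2 \leq \|h\|^2/A$ gives
\[
\|f\|^4 \leq \frac{\|V^{-1}f\|^2}{A}\sum_i |\langle f, g_i\rangle|^2 \leq \frac{\|V^{-1}\|^2}{A}\,\|f\|^2 \sum_i |\langle f, g_i\rangle|^2,
\]
so $\sum_i |\langle f, g_i\rangle|^2 \geq (A/\|V^{-1}\|^2)\|f\|^2$, and the algebraic identity $1 - (\lambda_1+\lambda_2+\mu/\sqrt{A})/(1+\lambda_2) = (1-\lambda_1-\mu/\sqrt{A})/(1+\lambda_2)$ recovers exactly the stated lower bound. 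The main obstacle is the careful bookkeeping: rigorously justifying the passage from the finite-sum hypothesis to the corresponding infinite-series inequalities (which relies on having already established Step~1), and verifying that the constants produced by Lemma~\ref{lem3} simplify to the precise form displayed in the statement.
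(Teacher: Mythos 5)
Your argument is correct, and the paper itself states this proposition without proof (it is quoted from Casazza--Christensen); your two-step scheme --- the triangle-inequality estimate on the synthesis operator $U$ for the Bessel bound, then invertibility of $V=UT^{*}S^{-1}$ via Lemma~\ref{lem3} and a Cauchy--Schwarz estimate against the canonical dual for the lower bound --- is exactly the argument the paper runs for its HS-frame generalization in Theorem~\ref{th1} under condition~(\ref{eq3}), including the same algebraic simplification of the constants. So the proposal matches the paper's approach and needs no correction.
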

Similar to ordinary frames, HS-frames are stable under small perturbations. The stability of HS-frames is discussed in the following theorem. 
\begin{thm}\label{th1}
Let $\{\g : j \in J\}$ be a HS-frame for $\BH$ with respect to $\BK$. Let $A, B$ be the frame bounds. Suppose that $\ga \in \mathcal{L}(\BH, C_2)$ and there exist constants $\lambda_1, \lambda_2, \mu \geq 0$ such that $\max(\lambda_1+\frac{\mu}{\sqrt{A}}, \lambda_2)<1$ and one of the following two conditions is satisfied:
\begin{eqnarray}\label{eq2}
&& \bigg( \sum_{j \in J} \Vert (\g -\ga)f \Vert^2 \bigg)^{1/2} \nonumber \\ &\leq & \lambda_1 \bigg(\sum_{j \in J} \Vert \g (f) \Vert^2 \bigg)^{1/2}+\lambda_2 \bigg(\sum_{j \in J} \Vert \ga (f) \Vert^2 \bigg)^{1/2} +\mu \Vert f \Vert, \; \forall f \in \BH,
\end{eqnarray}
or
\begin{eqnarray}\label{eq3}
&& \bigg\Vert \sum_{j \in J_1} (\g^* -\ga^*)\a \bigg\Vert \nonumber \\ & \leq & \lambda_1 \bigg\Vert \sum_{j \in J_1} \g^* (\a) \bigg\Vert + \lambda_2 \bigg\Vert \sum_{j \in J_1} \ga^* (\a) \bigg\Vert+ \mu \bigg( \sum_{j \in J_1} \Vert \a \Vert^2 \bigg)^{1/2},
\end{eqnarray}
for any finite subset $J_1 \subset J$ and $\a \in C_2$. Then $\{\ga: j \in J\}$ is a HS-frame for $\BH$ with bounds
\begin{equation}\label{eq10}
A \left( 1 - \frac{\lambda_1+\lambda_2+\frac{\mu}{\sqrt{A}}}{1+\lambda_2} \right)^2, \;\; B \left( 1+ \frac{\lambda_1+\lambda_2+\frac{\mu}{\sqrt{B}}}{1-\lambda_2} \right)^2.
\end{equation}
\end{thm}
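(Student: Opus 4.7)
The plan is to treat the two hypotheses (\ref{eq2}) and (\ref{eq3}) separately, as they control different operator-theoretic aspects of the perturbation: (\ref{eq2}) bounds the difference of analysis operators $T^* - T_1^*$, while (\ref{eq3}) bounds the difference of synthesis operators $T - T_1$, where $T$ and $T_1$ denote the synthesis operators of $\{\g\}$ and (eventually) $\{\ga\}$. In both cases the strategy is the same: leverage the two-sided HS-frame bound for $\{\g\}$ together with the perturbation inequality to extract a two-sided bound for $\{\ga\}$.

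Under hypothesis (\ref{eq2}), the bounds can be extracted by a direct triangle inequality in the $\ell^2$ of Hilbert--Schmidt norms. For the upper bound, write $(\sum \Vert\ga f\Vert^2)^{1/2} \leq (\sum \Vert(\g - \ga)f\Vert^2)^{1/2} + (\sum \Vert\g f\Vert^2)^{1/2}$, substitute (\ref{eq2}), and use $(\sum\Vert\g f\Vert^2)^{1/2} \leq \sqrt{B}\Vert f\Vert$ to obtain $(1-\lambda_2)(\sum\Vert\ga f\Vert^2)^{1/2} \leq ((1+\lambda_1)\sqrt{B}+\mu)\Vert f\Vert$, which squared is exactly the upper bound in (\ref{eq10}). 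For the lower bound, the reversed triangle inequality together with (\ref{eq2}) and $(\sum\Vert\g f\Vert^2)^{1/2} \geq \sqrt{A}\Vert f\Vert$ gives $(1+\lambda_2)(\sum\Vert\ga f\Vert^2)^{1/2} \geq ((1-\lambda_1)\sqrt{A}-\mu)\Vert f\Vert$; the hypothesis $\lambda_1 + \mu/\sqrt{A}<1$ makes the right-hand side strictly positive and yields the lower bound in (\ref{eq10}).

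Under hypothesis (\ref{eq3}), I would first extract the Bessel estimate (hence upper bound) by inserting $\Vert\sum_{j\in J_1}\gtr(\a)\Vert \leq \sqrt{B}(\sum_{j\in J_1}\Vert\a\Vert^2)^{1/2}$ from Proposition \ref{pro1} into (\ref{eq3}) via the triangle inequality to get $\Vert\sum_{j\in J_1}\ga^*(\a)\Vert \leq \frac{(1+\lambda_1)\sqrt{B}+\mu}{1-\lambda_2}(\sum_{j\in J_1}\Vert\a\Vert^2)^{1/2}$. By Proposition \ref{pro1}, $T_1$ is then well defined and bounded, with $\Vert T_1\Vert^2$ equal to the claimed upper bound. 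For the lower bound, I introduce the auxiliary operator $V : \BH \to \BH$ defined through the canonical dual by $Vf = \sum_j \ga^* \gt(f) = T_1 \tilde{T}^* f$, where $\tilde{T}$ is the synthesis operator of $\{\gt = \g S^{-1}\}$. Since $f = T\tilde{T}^* f$ by the reconstruction formula, $f - Vf = (T - T_1)\tilde{T}^* f$; applying (\ref{eq3}) with $\a = \gt(f)$ on finite subsets $J_1 \nearrow J$, passing to the limit via Lemma \ref{lem1}, and using $\sum\Vert\gt(f)\Vert^2 = \langle f, S^{-1}f\rangle \leq \Vert f\Vert^2/A$ yield
\[ \Vert f - Vf\Vert \leq (\lambda_1 + \mu/\sqrt{A})\Vert f\Vert + \lambda_2\Vert Vf\Vert. \]
Lemma \ref{lem3} then shows $V$ is bounded invertible, so $T_1$ is surjective, and Proposition \ref{pro2} gives that $\{\ga\}$ is a HS-frame.

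The delicate step, which I expect to be the main obstacle, is sharpening the lower HS-frame bound to match the stated constant. Lemma \ref{lem3} yields $\Vert V^{-1}\Vert \leq (1+\lambda_2)/(1-\lambda_1-\mu/\sqrt{A})$, hence $\Vert V^* f\Vert \geq \Vert V^{-1}\Vert^{-1}\Vert f\Vert$. Factoring $V^* = \tilde{T}\, T_1^*$ and using the \emph{dual} upper HS-frame bound $\Vert\tilde{T}\Vert \leq 1/\sqrt{A}$ (rather than the crude $\Vert S^{-1}\Vert\,\Vert T\Vert \leq \sqrt{B}/A$, which would spuriously introduce a $B$ into the lower bound), one finds $\Vert T_1^* f\Vert \geq \sqrt{A}\cdot\frac{1-\lambda_1-\mu/\sqrt{A}}{1+\lambda_2}\Vert f\Vert$; squaring produces precisely $A\bigl(1 - (\lambda_1+\lambda_2+\mu/\sqrt{A})/(1+\lambda_2)\bigr)^2\Vert f\Vert^2$. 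Combined with the Bessel estimate, this yields (\ref{eq10}) under either hypothesis.
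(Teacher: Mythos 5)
Your proposal is correct and follows essentially the same route as the paper: a two-sided triangle-inequality argument for hypothesis (\ref{eq2}), and for hypothesis (\ref{eq3}) the Bessel bound via finite sections plus invertibility of the operator $f\mapsto\sum_j\ga^*\g S^{-1}f$ through Lemma \ref{lem3}. Your final extraction of the lower bound (factoring the adjoint as $\tilde{T}T_1^*$ and using $\Vert\tilde T\Vert\le 1/\sqrt{A}$) is just the adjoint formulation of the paper's Cauchy--Schwarz computation of $\langle f,f\rangle$ and yields the identical constant.
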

\begin{proof}
First, we assume that (\ref{eq2}) is satisfied. Notice that 
\[ \sum_{j \in J} \Vert \g (f) \Vert^2 \leq B \Vert f \Vert^2. \]
From (\ref{eq2}) we see that
\[ \bigg( \sum_{j \in J} \Vert (\g -\ga)f \Vert^2 \bigg)^{1/2} \leq \bigg(\lambda_1 \sqrt{B} +\mu \bigg)\Vert f \Vert +\lambda_2 \bigg(\sum_{j \in J} \Vert \ga (f) \Vert^2 \bigg)^{1/2}. \]
Using the triangle inequality, we get
\[ \bigg( \sum_{j \in J} \Vert (\g -\ga)f \Vert^2 \bigg)^{1/2} \geq \bigg(\sum_{j \in J} \Vert \ga (f) \Vert^2 \bigg)^{1/2} - \bigg(\sum_{j \in J} \Vert \g (f) \Vert^2 \bigg)^{1/2}. \] 
Hence
\begin{eqnarray*}
(1-\lambda_2) \bigg(\sum_{j \in J} \Vert \ga (f) \Vert^2 \bigg)^{1/2} & \leq & \left(\lambda_1 \sqrt{B} +\mu \right)\Vert f \Vert + \bigg(\sum_{j \in J} \Vert \g (f) \Vert^2 \bigg)^{1/2} \\
 & \leq & \sqrt{B} \bigg( 1 + \lambda_1 + \frac{\mu}{\sqrt{B}} \bigg)\Vert f \Vert. 
\end{eqnarray*}
Therefore,
\[ \sum_{j \in J} \Vert \ga (f) \Vert^2 \leq B \left( 1+ \frac{\lambda_1+\lambda_2+\frac{\mu}{\sqrt{B}}}{1-\lambda_2} \right)^2 \Vert f \Vert^2. \]
Similarly we can prove that
\[ \sum_{j \in J} \Vert \ga (f) \Vert^2 \geq A \left( 1 - \frac{\lambda_1+\lambda_2+\frac{\mu}{\sqrt{A}}}{1+\lambda_2} \right)^2 \Vert f \Vert^2. \]
Next, we assume that (\ref{eq3}) is satisfied. Let $T$ and $S$ denote the synthesis operator and frame operator associated with $\{\g : j \in J\}$. Also, let $V$ denote the synthesis operator associated with $\{\ga : j \in J\}$. Since $\{\g : j \in J\}$ is a HS-frame for $\BH$ with bounds $A$ and $B$, by Proposition \ref{pro1}, $T$ is a bounded operator with $\Vert T \Vert \leq \sqrt{B}.$ From the inequality (\ref{eq3}), using the triangle inequality, we get
\[ \bigg\Vert \sum_{j \in J_1} \ga^* (\a) \bigg\Vert \leq \frac{1+\lambda_1}{1-\lambda_2}  \bigg\Vert \sum_{j \in J_1} \g^* (\a) \bigg\Vert + \frac{\mu}{1-\lambda_2} \bigg( \sum_{j \in J_1} \Vert \a \Vert^2 \bigg)^{1/2}. \] So for any $\{\a: j \in J\} \in \bigoplus C_2$, the series $\sum_{j \in J} \ga^* (\a)$ is convergent. Hence $\{\ga : j \in J\}$ is a HS-Bessel sequence for $\BH$. Using the definition of a synthesis operator, we get
\begin{eqnarray*}
\Vert V(\{\a\}_{j \in J}) \Vert & \leq & \frac{1+\lambda_1}{1-\lambda_2} \Vert T(\{\a\}_{j \in J}) \Vert + \frac{\mu}{1-\lambda_2} \Vert \{\a\}_{j \in J} \Vert \\
& \leq & \frac{(1+\lambda_1) \sqrt{B}+\mu}{1-\lambda_2} \Vert \{\a\}_{j \in J} \Vert \\
& =&  \sqrt{B} \bigg(1+ \frac{\lambda_1+\lambda_2+\frac{\mu}{\sqrt{B}}}{1-\lambda_2}\bigg) \Vert \{\a\}_{j \in J} \Vert, \; \forall \{\a\}_{j \in J} \in \bigoplus C_2.
\end{eqnarray*}
It implies that $\{\ga : j \in J\}$ is a HS-Bessel sequence with bound $B (1+ \frac{\lambda_1+\lambda_2+\mu/ \sqrt{B}}{1-\lambda_2})^2$. For any $f \in \BH$, let $\{\a\}_{j \in J}=\{\g S^{-1}f\}_{j \in J} \in \bigoplus C_2$. Using the inequality (\ref{eq3}) on the sequence $\{\g S^{-1}f\}_{j \in J}$ we obtain that
\begin{eqnarray*}
&& \bigg\Vert \sum_{j \in J} (\g^* -\ga^*)\g S^{-1}f \bigg\Vert \nonumber \\ & \leq & \lambda_1 \bigg\Vert \sum_{j \in J} \g^* \g S^{-1}f \bigg\Vert + \lambda_2 \bigg\Vert \sum_{j \in J} \ga^* \g S^{-1}f \bigg\Vert+ \mu \bigg( \sum_{j \in J} \Vert \g S^{-1}f \Vert^2 \bigg)^{1/2}.
\end{eqnarray*}
Since for any $ f \in \BH,$ we have
\[ \sum_{j \in J} \g^* \g S^{-1}f=f, \sum_{j \in J} \ga^* \g S^{-1}f=VT^*S^{-1}f \mathrm{\;and\;} \bigg( \sum_{j \in J} \Vert \g S^{-1}f \Vert^2 \bigg)^{1/2} \leq \frac{1}{\sqrt{A}} \Vert f \Vert,\] 
from the above inequality we obtain
\[\Vert f-  VT^*S^{-1}f \Vert \leq \bigg( \lambda_1+\frac{\mu}{\sqrt{A}} \bigg) \Vert f \Vert + \lambda_2 \Vert VT^*S^{-1}f \Vert, \forall f \in \BH. \]
So, by Lemma \ref{lem3}, the operator $VT^*S^{-1}$ is invertible, and 
\[ \Vert VT^*S^{-1} \Vert \leq \frac{1+\lambda_1+\frac{\mu}{\sqrt{A}}}{1-\lambda_2}, \;  \Vert ( VT^*S^{-1})^{-1} \Vert \leq \frac{1+\lambda_2}{1-(\lambda_1+\frac{\mu}{\sqrt{A}})}. \]
Every $f \in \BH$ can be written as
\[ f= VT^*S^{-1}(VT^*S^{-1})^{-1}f=\sum_{j \in J} \ga^* \g S^{-1}(VT^*S^{-1})^{-1}f. \]
It implies that
\begin{eqnarray*}
\langle f,f \rangle &=& \bigg\langle \sum_{j \in J} \ga^* \g S^{-1}(VT^*S^{-1})^{-1}f,f \bigg\rangle = \sum_{j \in J} \big[ \g S^{-1}(VT^*S^{-1})^{-1}f, \ga f \big]_{\tau} \\
& \leq & \sum_{j \in J} \Vert \g S^{-1}(VT^*S^{-1})^{-1}f \Vert \cdot \Vert \ga f \Vert \\ 
& \leq & \bigg( \sum_{j \in J} \Vert \g S^{-1}(VT^*S^{-1})^{-1}f \Vert^2 \bigg)^{1/2} \cdot \bigg( \sum_{j \in J} \Vert \ga f \Vert^2 \bigg)^{1/2} \\
& \leq & \frac{1}{\sqrt{A}} \Vert (VT^*S^{-1})^{-1}f \Vert \cdot \bigg( \sum_{j \in J} \Vert \ga f \Vert^2 \bigg)^{1/2} \\
& \leq & \frac{1}{\sqrt{A}} \bigg( \frac{1+\lambda_2}{1-(\lambda_1+\frac{\mu}{\sqrt{A}})} \bigg) \Vert f \Vert \cdot \bigg( \sum_{j \in J} \Vert \ga f \Vert^2 \bigg)^{1/2}, \forall f \in \BH.
\end{eqnarray*} 
Therefore
\begin{eqnarray*}
 \sum_{j \in J} \Vert \ga f \Vert^2 \geq A \bigg( \frac{1-(\lambda_1+\frac{\mu}{\sqrt{A}})}{1+\lambda_2} \bigg)^2 \Vert f \Vert^2= A \bigg(1- \frac{\lambda_1+\lambda_2+\frac{\mu}{\sqrt{A}}}{1+\lambda_2} \bigg)^2 \Vert f \Vert^2, \; \forall f \in \BH.
\end{eqnarray*}
This completes the proof.
\end{proof}
\begin{rmk}
In general, the inequality (\ref{eq2}) does not imply that $\{\ga : j \in J \}$ is a HS-frame regardless how small the parameters $\lambda_1, \lambda_2, \mu$ are. A counterexample for $g$-frames can be found in \cite{sun07}, and an example can be constructed similarly for HS-frames. 
\end{rmk}
\begin{cor}
Let $\{\g : j \in J\}$ be a HS-Riesz basis for $\BH$ with bounds $A$ and $B$. Assume that the condition (\ref{eq3}) in Theorem \ref{th1} is satisfied, then $\{\ga : j \in J\}$ is also a HS-Riesz basis for $\BH$ with bounds given by (\ref{eq10}).
\end{cor}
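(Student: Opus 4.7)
The plan is to reduce the corollary to Lemma \ref{lem4}, which characterizes HS-Riesz bases through a two-sided norm estimate for the synthesis operator on $\bigoplus C_2$. Let $T$ be the synthesis operator for $\{\g:j\in J\}$ and $V$ the (candidate) synthesis operator for $\{\ga:j\in J\}$. Since $\{\g:j\in J\}$ is a HS-Riesz basis with bounds $A$ and $B$, Lemma \ref{lem4} gives
\[
A\sum_{j\in J}\|\a\|^2 \;\leq\; \|T(\{\a\}_{j\in J})\|^2 \;\leq\; B\sum_{j\in J}\|\a\|^2,\qquad \{\a\}_{j\in J}\in\bigoplus C_2.
\]
My goal is to derive the analogous two-sided inequality for $V$ with the bounds stated in (\ref{eq10}), and then invoke Lemma \ref{lem4} in the opposite direction.

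First I would rewrite hypothesis (\ref{eq3}) in operator form: for every finite $J_1\subset J$ and $\a\in C_2$ ($j\in J_1$),
\[
\|(V-T)(\{\a\}_{j\in J_1})\| \;\leq\; \lambda_1\|T(\{\a\}_{j\in J_1})\| + \lambda_2\|V(\{\a\}_{j\in J_1})\| + \mu\Bigl(\sum_{j\in J_1}\|\a\|^2\Bigr)^{1/2}.
\]
Two applications of the triangle inequality (as in the second half of the proof of Theorem \ref{th1}) then yield
\[
(1-\lambda_2)\|V(\{\a\})\| \leq (1+\lambda_1)\|T(\{\a\})\| + \mu\Bigl(\sum\|\a\|^2\Bigr)^{1/2},
\]
\[
(1+\lambda_2)\|V(\{\a\})\| \geq (1-\lambda_1)\|T(\{\a\})\| - \mu\Bigl(\sum\|\a\|^2\Bigr)^{1/2}.
\]
Plugging in the two-sided bounds on $\|T(\{\a\})\|$ and using $\lambda_1+\mu/\sqrt{A}<1$, I obtain
\[
\sqrt{A}\Bigl(1-\tfrac{\lambda_1+\lambda_2+\mu/\sqrt{A}}{1+\lambda_2}\Bigr)\Bigl(\sum\|\a\|^2\Bigr)^{1/2} \leq \|V(\{\a\})\| \leq \sqrt{B}\Bigl(1+\tfrac{\lambda_1+\lambda_2+\mu/\sqrt{B}}{1-\lambda_2}\Bigr)\Bigl(\sum\|\a\|^2\Bigr)^{1/2}
\]
for all finitely supported $\{\a\}\in\bigoplus C_2$.

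The main obstacle is to pass this two-sided estimate from finitely supported sequences to all of $\bigoplus C_2$. The upper bound already shows that $V$ extends continuously to a bounded operator on $\bigoplus C_2$ (equivalently, $\{\ga\}$ is a HS-Bessel sequence, as established inside the proof of Theorem \ref{th1}), so the upper inequality passes to the limit by continuity; the lower inequality likewise extends since both sides are continuous in the $\ell^2$-norm of $\{\a\}$. Thus the two-sided estimate (\ref{eq7}) holds for $V$ with constants $A(1-\tfrac{\lambda_1+\lambda_2+\mu/\sqrt{A}}{1+\lambda_2})^2$ and $B(1+\tfrac{\lambda_1+\lambda_2+\mu/\sqrt{B}}{1-\lambda_2})^2$. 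In particular $V$ is injective, and Theorem \ref{th1} already gives that $\{\ga\}$ is a HS-frame; combining injectivity of $V$ with surjectivity (which follows from the HS-frame property via Proposition \ref{pro2}) shows that $V$ is a linear homeomorphism. Invoking Lemma \ref{lem4} one last time, I conclude that $\{\ga:j\in J\}$ is a HS-Riesz basis for $\BH$ with the bounds specified in (\ref{eq10}), finishing the proof.
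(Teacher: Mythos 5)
Your proof is correct, but it takes a genuinely different route from the paper's. The paper first invokes Theorem \ref{th3} together with Theorem \ref{th1} to get that $\{\ga : j \in J\}$ is a HS-frame with the bounds (\ref{eq10}), and then checks the one remaining ingredient of Theorem \ref{th3}, namely $\bigoplus C_2$-linear independence: if $\sum_{j\in J}\ga^*(\a)=0$, then (\ref{eq3}) collapses to $(1-\lambda_1)\big\Vert\sum_{j}\g^*(\a)\big\Vert\leq\mu\big(\sum_{j}\Vert\a\Vert^2\big)^{1/2}$, which combined with the Riesz lower bound $\sqrt{A}\big(\sum_{j}\Vert\a\Vert^2\big)^{1/2}\leq\big\Vert\sum_{j}\g^*(\a)\big\Vert$ forces $\big(1-\lambda_1-\mu/\sqrt{A}\big)\big(\sum_{j}\Vert\a\Vert^2\big)^{1/2}\leq 0$, hence $\a=0$. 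You instead work with the synthesis-operator characterization of Lemma \ref{lem4} and perturb the two-sided estimate for $T$ into a two-sided estimate for $V$; your algebra is right, and the constants you obtain do match (\ref{eq10}) after squaring. The paper's argument is shorter, since only the kernel of $V$ needs to be controlled and the bounds are inherited through the equivalence in Theorem \ref{th3}; yours has the merit of exhibiting the Riesz inequality (\ref{eq7}) for $\{\ga : j\in J\}$ with the constants (\ref{eq10}) explicitly. Two small points in your write-up are worth keeping as stated: the density/continuity step (needed because (\ref{eq3}) is only assumed for finite $J_1$) is legitimate once the upper estimate makes $V$ bounded, and the appeal to Theorem \ref{th1} and Proposition \ref{pro2} for surjectivity of $V$ is genuinely necessary, since the two-sided estimate alone only makes $V$ an isomorphism onto its closed range.
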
 
\begin{proof}
From Theorem \ref{th3} and Theorem \ref{th1}, we obtain that $\{\ga : j \in J\}$ is a HS-frame for $\BH$ with bounds given by (\ref{eq10}). Let $\sum_{j \in J} \ga^*(\a)=0$ for $\{ \a \}_{j \in J} \in \bigoplus C_2$. Since $\{\g : j \in J\}$ is a HS-Riesz basis for $\BH$ with bounds $A$ and $B$, from (\ref{eq3}), we get
\[ \sqrt{A} \bigg( \sum_{j \in J} \Vert \a \Vert^2  \bigg)^{1/2} \leq \bigg\Vert \sum_{j \in J} \g^*(\a) \bigg\Vert \leq \frac{\mu}{1-\lambda_1} \bigg( \sum_{j \in J} \Vert \a \Vert^2  \bigg)^{1/2}. \]
It implies that \[ \bigg( 1 -\lambda_1 -\frac{\mu}{\sqrt{A}} \bigg) \bigg( \sum_{j \in J} \Vert \a \Vert^2  \bigg)^{1/2} \leq 0. \]
Since $1 - \lambda_1 - \frac{\mu}{\sqrt{A}} > 0$, $\sum_{j \in J} \Vert \a \Vert^2 =0$. Hence $\a=0$ for all $j \in J$. It follows that $\{\ga : j \in J\}$ is an $\bigoplus C_2$-linearly independent family. From Theorem \ref{th3}, we find that $\{\ga : j \in J\}$ is a HS-Riesz basis for $\BH$ with bounds given by (\ref{eq10}), which completes the proof.
\end{proof}

\begin{cor}
Let $\{\g : j \in J\}$ be a HS-frame for $\BH$ with bounds $A, B$, and let $\{\ga: j \in J\}$ be a sequence in $\mathcal{L}(\BH, C_2)$. Assume that there exists a constant $0<M<A$ such that 
\[ \sum_{j \in J} \Vert (\g -\ga)f \Vert^2 \leq M \Vert f \Vert^2, \; \forall f \in \BH, \] then $\{\ga : j \in J\}$ is a HS-frame for $\BH$ with bounds $A[1-(M/A)^{1/2}]^2$ and $B[1+(M/B)^{1/2}]^2$.
\end{cor}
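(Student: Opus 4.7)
The plan is to deduce this corollary as a direct specialization of Theorem \ref{th1}, choosing the perturbation parameters so that inequality (\ref{eq2}) collapses to the hypothesis at hand.

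Specifically, I would set $\lambda_1 = \lambda_2 = 0$ and $\mu = \sqrt{M}$. With this choice, inequality (\ref{eq2}) reads
\[ \bigg( \sum_{j \in J} \Vert (\g - \ga) f \Vert^2 \bigg)^{1/2} \leq \sqrt{M}\, \Vert f \Vert, \]
which is exactly the hypothesis (after taking square roots). The side condition $\max(\lambda_1 + \mu/\sqrt{A},\lambda_2) < 1$ becomes $\sqrt{M/A} < 1$, which holds since $0 < M < A$ by assumption.

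Applying Theorem \ref{th1}, $\{\ga : j \in J\}$ is a HS-frame for $\BH$ with the bounds in (\ref{eq10}). Substituting $\lambda_1 = \lambda_2 = 0$ and $\mu = \sqrt{M}$, the lower bound simplifies to
\[ A\bigg(1 - \frac{\sqrt{M/A}}{1}\bigg)^2 = A\big[1 - (M/A)^{1/2}\big]^2, \]
and the upper bound to
\[ B\bigg(1 + \frac{\sqrt{M/B}}{1}\bigg)^2 = B\big[1 + (M/B)^{1/2}\big]^2, \]
which are precisely the claimed bounds.

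There is really no obstacle here, since all the work has been done in Theorem \ref{th1}; the only thing to verify is the parameter matching and the strict inequality for applicability. I would write the argument as a short paragraph that points out the choice of parameters, checks the admissibility condition $M < A$, and reads off the bounds from (\ref{eq10}).
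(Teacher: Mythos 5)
Your proposal is correct and matches the paper's proof exactly: the paper also sets $\lambda_1=\lambda_2=0$, $\mu=\sqrt{M}$, notes $\mu/\sqrt{A}=\sqrt{M/A}<1$ since $M<A$, and applies Theorem \ref{th1} to read off the bounds from (\ref{eq10}).
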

\begin{proof}
Let $\lambda_1=\lambda_2=0$ and $\mu=\sqrt{M}.$ Since $M< A$, $\mu/\sqrt{A}=\sqrt{M/A}<1.$ So, by Theorem \ref{th1}, $\{\ga : j \in J\}$ is a HS-frame for $\BH$ with bounds $A[1-(M/A)^{1/2}]^2$ and $B[1+(M/B)^{1/2}]^2$. 
\end{proof}
In \cite{guo13}, the author established the various perturbation results on $g$-frames in Hilbert spaces. Motivated by his results, in the following, we discuss some interesting perturbation results for HS-frames.
\begin{thm}\label{th2}
Let $\{\g : j \in J\}$ be a HS-frame for $\BH$ with bounds $A, B$ and $\{\ga: j \in J\} \subseteq \mathcal{L}(\BH, C_2)$ be a HS-Bessel sequence with bound $D$. Assume that there exist constants $\lambda_1, \lambda_2, \mu, \nu \geq 0$ such that $\max\{\lambda_1+\frac{\mu}{\sqrt{A}}+ \frac{\nu}{A} \cdot \sqrt{D}, \lambda_2 \}< 1$ and the following condition is satisfied,
\begin{eqnarray}\label{eq5}
\bigg\Vert \sum_{j \in J} (\g^* \g f -\ga^* \ga f) \bigg\Vert  & \leq &  \lambda_1 \bigg\Vert \sum_{j \in J} \g^*\g f  \bigg\Vert + \lambda_2 \bigg\Vert \sum_{j \in J} \ga^* \ga f \bigg\Vert \nonumber \\ 
&& + \mu \bigg( \sum_{j \in J} \Vert \g f \Vert^2 \bigg)^{1/2}+ \nu \bigg( \sum_{j \in J} \Vert \ga f \Vert^2 \bigg)^{1/2}, \; \forall f \in \BH.
\end{eqnarray}
Then $\{\ga: j \in J\}$ is a HS-frame for $\BH$.
\end{thm}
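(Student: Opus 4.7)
The plan is to mimic the second half of the proof of Theorem \ref{th1}, but centered on the HS-frame operator level rather than the synthesis operator level. The upper HS-frame bound is free: since $\{\ga:j\in J\}$ is given as a HS-Bessel sequence with bound $D$, we already have $\sum_{j\in J}\Vert\ga f\Vert^2\le D\Vert f\Vert^2$, so only the lower bound needs work.

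Let $S$ denote the HS-frame operator of $\{\g:j\in J\}$ and write $\widetilde{S}f=\sum_{j\in J}\ga^{*}\ga f$ (this sum makes sense because $\{\ga:j\in J\}$ is HS-Bessel). First I would apply the hypothesis (\ref{eq5}) to the vector $S^{-1}f$ in place of $f$. This converts the left-hand side into $\Vert f-\widetilde{S}S^{-1}f\Vert$ and the first right-hand term into $\lambda_1\Vert f\Vert$. For the square-root terms I would use the two standard estimates: since $\sum_{j\in J}\Vert\g S^{-1}f\Vert^{2}=\langle SS^{-1}f,S^{-1}f\rangle=\langle f,S^{-1}f\rangle\le \Vert f\Vert^{2}/A$, we get $\bigl(\sum_{j\in J}\Vert\g S^{-1}f\Vert^{2}\bigr)^{1/2}\le\Vert f\Vert/\sqrt{A}$, and since $\{\ga\}$ is Bessel with bound $D$, $\bigl(\sum_{j\in J}\Vert\ga S^{-1}f\Vert^{2}\bigr)^{1/2}\le\sqrt{D}\,\Vert S^{-1}f\Vert\le(\sqrt{D}/A)\Vert f\Vert$. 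Combining these yields
\[
\Vert f-\widetilde{S}S^{-1}f\Vert\le\Bigl(\lambda_{1}+\tfrac{\mu}{\sqrt{A}}+\tfrac{\nu\sqrt{D}}{A}\Bigr)\Vert f\Vert+\lambda_{2}\Vert\widetilde{S}S^{-1}f\Vert,\quad\forall f\in\BH.
\]
By the assumption on $\max\{\lambda_{1}+\mu/\sqrt{A}+\nu\sqrt{D}/A,\lambda_{2}\}$, both coefficients lie in $[0,1)$, so Lemma \ref{lem3} applies to the operator $\widetilde{S}S^{-1}$ on $\BH$, giving that it is a bounded invertible operator with
\[
\Vert(\widetilde{S}S^{-1})^{-1}\Vert\le\frac{1+\lambda_{2}}{1-(\lambda_{1}+\mu/\sqrt{A}+\nu\sqrt{D}/A)}.
\]

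With invertibility in hand, I would expand every $f\in\BH$ as $f=\widetilde{S}S^{-1}(\widetilde{S}S^{-1})^{-1}f=\sum_{j\in J}\ga^{*}\ga S^{-1}(\widetilde{S}S^{-1})^{-1}f$, exactly as in the second part of the proof of Theorem \ref{th1}. Pairing with $f$, rewriting the sum through the $[\,\cdot\,,\,\cdot\,]_{\tau}$ inner product on $C_{2}$, and applying Cauchy--Schwarz gives
\[
\Vert f\Vert^{2}\le\Bigl(\sum_{j\in J}\Vert\ga S^{-1}(\widetilde{S}S^{-1})^{-1}f\Vert^{2}\Bigr)^{1/2}\Bigl(\sum_{j\in J}\Vert\ga f\Vert^{2}\Bigr)^{1/2}.
\]
Bounding the first factor again by the Bessel estimate for $\{\ga\}$ with bound $D$ and by $\Vert S^{-1}\Vert\le 1/A$ and the norm bound above, one obtains a constant times $\Vert f\Vert$, which after squaring produces a lower HS-frame bound of the form
\[
\frac{A^{2}}{D}\Bigl(\frac{1-(\lambda_{1}+\mu/\sqrt{A}+\nu\sqrt{D}/A)}{1+\lambda_{2}}\Bigr)^{2}.
\]

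The main obstacle is just the bookkeeping in the first step: verifying the perturbation inequality is in the exact form required by Lemma \ref{lem3} so that the constants $\lambda_{1}+\mu/\sqrt{A}+\nu\sqrt{D}/A$ and $\lambda_{2}$ appear in the right roles. Once that reduction is cleanly made, the remainder is an adaptation of the argument already used for (\ref{eq3}) in Theorem \ref{th1}, with the synthesis operator replaced by the HS-frame operator, so no new analytic difficulty arises.
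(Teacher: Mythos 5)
Your proposal is correct and follows essentially the same route as the paper: the paper likewise applies (\ref{eq5}) at $S^{-1}f$, bounds the square-root terms by $\mu\Vert f\Vert/\sqrt{A}$ and $\nu\sqrt{D}\Vert f\Vert/A$, and invokes Lemma \ref{lem3} to get invertibility of $GS^{-1}$ (your $\widetilde{S}S^{-1}$). The only difference is that the paper stops once $G$ is invertible, while you go on to extract the explicit lower bound $\frac{A^{2}}{D}\bigl(\frac{1-(\lambda_{1}+\mu/\sqrt{A}+\nu\sqrt{D}/A)}{1+\lambda_{2}}\bigr)^{2}$, which is a harmless (and slightly more informative) addition.
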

\begin{proof}
Let $Sf=\sum_{j \in J}\g^* \g f$ and $Gf=\sum_{j \in J}\ga^* \ga f$. Since $\{\g : j \in J\}$ is a HS-frame and $\{\ga: j \in J\}$ is a HS-Bessel sequence, $S$ is invertible and $G$ is a
bounded operator on $\BH$. From the inequality (\ref{eq5}), for each $f \in \BH$ we have 
\begin{eqnarray*}
\Vert Sf -Gf \Vert & \leq & \lambda_1 \Vert Sf \Vert +\lambda_2 \Vert Gf \Vert+\mu \bigg( \sum_{j \in J} \Vert \g f \Vert^2 \bigg)^{1/2}+ \nu \bigg( \sum_{j \in J} \Vert \ga f \Vert^2 \bigg)^{1/2} \\
& \leq & \lambda_1 \Vert Sf \Vert +\lambda_2 \Vert Gf \Vert+\mu  \bigg( \sum_{j \in J} \Vert \g f \Vert^2 \bigg)^{1/2} + \nu \cdot \sqrt{D} \Vert f \Vert.
\end{eqnarray*}
Therefore 
\begin{eqnarray*}
\Vert f -GS^{-1}f \Vert & \leq & \lambda_1 \Vert f \Vert +\lambda_2 \Vert GS^{-1}f \Vert + \mu  \bigg( \sum_{j \in J} \Vert \g S^{-1}f \Vert^2 \bigg)^{1/2} + \nu  \sqrt{D} \Vert S^{-1}f \Vert \\
 & \leq & \lambda_1 \Vert f \Vert + \bigg( \frac{\mu}{\sqrt{A}} +\frac{\nu}{A} \cdot \sqrt{D} \bigg)  \Vert f  \Vert +\lambda_2 \Vert GS^{-1}f \Vert \\
 & = & \bigg( \lambda_1+ \frac{\mu}{\sqrt{A}} +\frac{\nu}{A} \cdot \sqrt{D} \bigg)  \Vert f  \Vert +\lambda_2 \Vert GS^{-1}f \Vert.
\end{eqnarray*}
Since $\max\{\lambda_1+\frac{\mu}{\sqrt{A}} + \frac{\nu}{A} \cdot \sqrt{D}, \lambda_2 \}< 1$, by Lemma \ref{lem3}, $GS^{-1}$ is invertible and consequently $G$ is invertible. It follows that $\{\ga: j \in J\}$ is a HS-frame for $\BH$.
\end{proof}
\begin{cor}
Let $\{\g : j \in J\}$ be a HS-frame for $\BH$ with bounds $A, B$ and $\{\ga: j \in J\} \subseteq \mathcal{L}(\BH, C_2)$ be a family of operators. Assume that there exist a constant $0<M<A$ such that
\[  \sum_{j \in J} \Vert \g^* \g f -\ga^* \ga f \Vert \leq M \Vert f \Vert, \; \forall f \in \BH, \]
then $\{\ga : j \in J\}$ is a HS-frame for $\BH$.
\end{cor}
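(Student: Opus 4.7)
The plan is to reduce the corollary to Theorem \ref{th2}, for which two pieces of work are needed: first verify that $\{\ga: j \in J\}$ is a HS-Bessel sequence (this is a standing hypothesis of Theorem \ref{th2}), and then massage the given absolute-summability estimate into the form of condition (\ref{eq5}).

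First I would establish the Bessel property. The hypothesis $\sum_{j \in J} \Vert \g^*\g f - \ga^*\ga f \Vert \le M\Vert f \Vert$ says that the series $\sum_{j \in J}(\g^*\g f - \ga^*\ga f)$ converges absolutely in $\BH$. Since $\sum_{j \in J}\g^*\g f = Sf$ converges by the HS-frame property of $\{\g\}$, subtraction shows that $Gf := \sum_{j \in J}\ga^*\ga f$ is well defined. Applying the triangle inequality,
\[
\Vert Gf \Vert \le \Vert Sf \Vert + \Big\Vert \sum_{j \in J}(\g^*\g f - \ga^*\ga f)\Big\Vert \le \Vert Sf\Vert + \sum_{j \in J}\Vert \g^*\g f - \ga^*\ga f\Vert \le (B+M)\Vert f \Vert.
\]
Now observe that $\sum_{j \in J} \Vert \ga f \Vert^2 = \sum_{j \in J}[\ga f,\ga f]_\tau = \langle Gf,f\rangle \le \Vert Gf\Vert\Vert f\Vert \le (B+M)\Vert f\Vert^2$, so $\{\ga\}$ is a HS-Bessel sequence with bound $D = B+M$.

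Second, I would verify (\ref{eq5}) with the choices $\lambda_1 = \lambda_2 = \nu = 0$ and $\mu = M/\sqrt{A}$. Using the triangle inequality once more together with the lower HS-frame bound $\sqrt{A}\Vert f\Vert \le (\sum_{j \in J}\Vert \g f\Vert^2)^{1/2}$, the hypothesis yields
\[
\Big\Vert \sum_{j \in J}(\g^*\g f - \ga^*\ga f)\Big\Vert \le M\Vert f\Vert \le \frac{M}{\sqrt{A}}\Big(\sum_{j \in J}\Vert \g f\Vert^2\Big)^{1/2},
\]
which is precisely (\ref{eq5}) with the stated constants. The smallness condition of Theorem \ref{th2} reduces to $\mu/\sqrt{A} = M/A < 1$, which holds by assumption. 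Theorem \ref{th2} then concludes that $\{\ga : j \in J\}$ is a HS-frame for $\BH$.

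The only genuinely non-routine step is the Bessel verification, because Theorem \ref{th2} is not stated to deliver Bessel-ness for free; the rest is bookkeeping. If one preferred to avoid Theorem \ref{th2}, the same ingredients give a short direct argument: $\Vert Sf - Gf\Vert \le M\Vert f\Vert$ together with $\langle Sf,f\rangle \ge A\Vert f\Vert^2$ yields $\sum_{j \in J}\Vert \ga f\Vert^2 = \langle Gf,f\rangle \ge (A-M)\Vert f\Vert^2$, and one reads off both frame bounds explicitly, namely $A-M$ and $B+M$. I would briefly mention this stronger quantitative conclusion as a remark at the end.
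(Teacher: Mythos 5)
Your proof is correct and follows essentially the same route as the paper: establish that $\{\ga : j\in J\}$ is a HS-Bessel sequence via the triangle inequality and $\sum_{j\in J}\Vert \ga f\Vert^2 = \langle Gf,f\rangle \le (B+M)\Vert f\Vert^2$, then apply Theorem \ref{th2} with $\lambda_1=\lambda_2=\nu=0$ and $\mu=M/\sqrt{A}$, using the lower frame bound to convert $M\Vert f\Vert$ into $\tfrac{M}{\sqrt{A}}\big(\sum_{j\in J}\Vert \g f\Vert^2\big)^{1/2}$. Your closing remark is a genuine (small) improvement: the direct estimate $\langle Gf,f\rangle \ge \langle Sf,f\rangle - \Vert Sf-Gf\Vert\,\Vert f\Vert \ge (A-M)\Vert f\Vert^2$ bypasses Theorem \ref{th2} entirely and yields the explicit frame bounds $A-M$ and $B+M$, which the paper's statement does not record.
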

\begin{proof}
For each $f \in \BH$, we have
\begin{eqnarray*}
\bigg\Vert \sum_{j \in J} \ga^* \ga f \bigg\Vert \leq \bigg\Vert \sum_{j \in J}  ( \g^* \g f -\ga^* \ga f) \bigg\Vert + \bigg\Vert \sum_{j \in J} \g^* \g f \bigg\Vert \leq (M+B)\Vert f \Vert.
\end{eqnarray*}
Thus $\sum_{j \in J} \ga^* \ga f$ is convergent for each $f \in \BH.$ Therefore for all $f \in \BH$
\[ \sum_{j \in J} \Vert \ga f \Vert^2= \sum_{j \in J} \langle \ga^* \ga f, f \rangle = \bigg\langle \sum_{j \in J}  \ga^* \ga f, f \bigg\rangle \leq \bigg\Vert \sum_{j \in J} \ga^* \ga f \bigg\Vert \cdot \Vert f \Vert \leq (M+B)\Vert f \Vert^2. \]
It follows that $\{\ga : j \in J\}$ is a HS-Bessel sequence for $\BH$. Also we have
\[ \bigg\Vert \sum_{j \in J} (\g^* \g f -\ga^* \ga f) \bigg\Vert \leq M \Vert f \Vert \leq \frac{M}{\sqrt{A}} \bigg( \sum_{j \in J} \Vert \g (f) \Vert^2 \bigg)^{1/2}, \; f \in \BH. \]
Let $\lambda_1=\lambda_2=\nu=0$ and $\mu = M/\sqrt{A}$. Since $M<A$, $\mu/\sqrt{A}=M/A<1.$  So, by Theorem \ref{th2}, $\{\ga : j \in J\}$ is a HS-frame for $\BH$.
\end{proof}
\begin{thm}
Let $\{\g : j \in J\}$ be a HS-frame for $\BH$ with bounds $A, B$ and $\{\ga: j \in J\} \subseteq \mathcal{L}(\BH, C_2)$ be a HS-Bessel sequence for $\BH$. Assume that there exist constants $\lambda_1, \lambda_2, \mu \geq 0$ with $\max\{\lambda_1+\frac{\mu}{\sqrt{A}}, \lambda_2 \}< 1$ such that
\begin{eqnarray}\label{eq6}
&& \bigg\Vert \sum_{j \in J} (\g^* \a -\ga^* \a) \bigg\Vert \nonumber \\ & \leq & \lambda_1 \bigg\Vert \sum_{j \in J} \g^* (\a) \bigg\Vert + \lambda_2 \bigg\Vert \sum_{j \in J} \ga^* (\a) \bigg\Vert+ \mu \bigg( \sum_{j \in J} \Vert \a \Vert^2 \bigg)^{1/2},
\end{eqnarray}
where $\{\a \}_{j \in J} \in \bigoplus C_2$, then $\{\ga : j \in J\}$ is a HS-frame for $\BH$.
\end{thm}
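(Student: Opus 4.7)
The plan is to argue almost exactly as in the second half of the proof of Theorem \ref{th1}, with the simplification that the HS-Bessel property of $\{\ga : j \in J\}$ (hence the existence and boundedness of its synthesis operator) is now part of the hypothesis rather than something to be derived. Let $T$ and $V$ denote the synthesis operators of $\{\g\}$ and $\{\ga\}$, and let $S = TT^*$ be the HS-frame operator of $\{\g\}$. In operator language, inequality (\ref{eq6}) says
\[
\|T\{\a\}_{j \in J} - V\{\a\}_{j \in J}\| \leq \lambda_1 \|T\{\a\}_{j \in J}\| + \lambda_2 \|V\{\a\}_{j \in J}\| + \mu \|\{\a\}_{j \in J}\|
\]
for all $\{\a\}_{j \in J} \in \bigoplus C_2$.

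The key specialization is to take $\{\a\}_{j \in J} = \{\g S^{-1} f\}_{j \in J}$ for an arbitrary $f \in \BH$. The reconstruction formula yields $T\{\g S^{-1} f\}_{j \in J} = \sum_{j \in J} \g^* \g S^{-1}f = f$, while $V\{\g S^{-1} f\}_{j \in J} = VT^* S^{-1} f$, and the upper frame bound gives $(\sum_{j\in J}\|\g S^{-1}f\|^2)^{1/2} \leq A^{-1/2}\|f\|$ as in Theorem \ref{th1}. Substituting these into the displayed inequality produces
\[
\|f - VT^* S^{-1} f\| \leq \Bigl(\lambda_1 + \frac{\mu}{\sqrt{A}}\Bigr)\|f\| + \lambda_2 \|VT^* S^{-1} f\|, \quad \forall f \in \BH.
\]
Since $\max\{\lambda_1 + \mu/\sqrt{A}, \lambda_2\} < 1$, Lemma \ref{lem3} applies to $U = VT^* S^{-1}$, which is therefore a bounded invertible operator on $\BH$, with an explicit bound on $\|(VT^* S^{-1})^{-1}\|$.

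From the identity $f = VT^* S^{-1}(VT^* S^{-1})^{-1} f = \sum_{j \in J} \ga^* \g S^{-1}(VT^* S^{-1})^{-1} f$, the lower HS-frame bound is obtained by the same Cauchy--Schwarz/$[\cdot,\cdot]_\tau$ trick used at the end of Theorem \ref{th1}: writing
\[
\|f\|^2 = \langle f, f\rangle = \sum_{j \in J} \bigl[\g S^{-1}(VT^* S^{-1})^{-1} f,\ \ga f\bigr]_{\tau}
\]
and applying Cauchy--Schwarz in $\bigoplus C_2$ together with the bound $(\sum_j \|\g S^{-1} g\|^2)^{1/2} \leq A^{-1/2}\|g\|$ yields a lower bound of the form $\sum_{j \in J}\|\ga f\|^2 \geq A'\|f\|^2$ for an explicit $A' > 0$. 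The upper HS-frame bound is supplied directly by the hypothesis that $\{\ga\}$ is HS-Bessel, so the proof is complete.

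The only real obstacle is making sure the substitution $\{\a\}_{j \in J} = \{\g S^{-1}f\}_{j\in J}$ is admissible as a element of $\bigoplus C_2$ and that (\ref{eq6}) really holds for this countable sequence; but this is immediate since $\sum_j \|\g S^{-1}f\|^2 \leq A^{-1}\|f\|^2 < \infty$, and the HS-Bessel property of both families ensures all the series in (\ref{eq6}) converge unconditionally by Lemma \ref{lem1}. Everything else is a transcription of the Theorem \ref{th1} argument.
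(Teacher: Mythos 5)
Your proposal is correct and follows essentially the same route as the paper: the same operator reformulation with $T$, $V$, $S$, the same substitution $\{\a\}_{j\in J}=\{\g S^{-1}f\}_{j\in J}$, and the same appeal to Lemma \ref{lem3} to invert $VT^*S^{-1}$. The only difference is in the last step, where you rerun the Cauchy--Schwarz argument from Theorem \ref{th1} to extract an explicit lower frame bound, whereas the paper finishes more quickly by observing that invertibility of $VT^*S^{-1}$ forces $V$ to be surjective and then invoking Proposition \ref{pro2}; both conclusions are valid.
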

\begin{proof}
Let $T$ and $S$ denote the synthesis operator and frame operator associated with $\{\g : j \in J\}$. Also, let $V$ denote the synthesis operator associated with $\{\ga : j \in J\}$. From the inequality (\ref{eq6}), we obtain
\[ \Vert T(\{ \a \}_{j \in J})- V(\{ \a \}_{j \in J}) \Vert \leq \lambda_1 \Vert T(\{ \a \}_{j \in J}) \Vert + \lambda_2 \Vert V(\{ \a \}_{j \in J}) \Vert+ \mu \bigg( \sum_{j \in J} \Vert \a \Vert^2 \bigg)^{1/2}. \]
For any $f \in \BH$, let $\{ \a \}_{j \in J}=\{\g S^{-1}f \}_{j \in J} \in \bigoplus C_2$, then
\begin{eqnarray*}
&& \Vert T(\{\g S^{-1}f \}_{j \in J})- V(\{\g S^{-1}f \}_{j \in J}) \Vert  =  \Vert T T^*(S^{-1}f)- V T^*(S^{-1}f) \Vert \\ 
& = & \Vert SS^{-1}f- V T^*S^{-1}f \Vert = \Vert f- V T^*S^{-1}f \Vert \\
& \leq & \lambda_1 \Vert f \Vert + \lambda_2 \Vert V T^*S^{-1}f \Vert + \frac{\mu}{\sqrt{A}} \Vert f \Vert = \bigg( \lambda_1+\frac{\mu}{\sqrt{A}} \bigg) \Vert f \Vert + \lambda_2 \Vert V T^*S^{-1}f \Vert.
\end{eqnarray*}
Since $\max\{\lambda_1+\frac{\mu}{\sqrt{A}}, \lambda_2 \}< 1$, by Lemma \ref{lem3}, the operator $V T^*S^{-1}$ is invertible and hence $V$ is surjective. Then by Proposition \ref{pro2}, the sequence $\{\ga : j \in J\}$ is a HS-frame for $\BH$.
\end{proof}
\begin{rmk}
Since $g$-frames can be considered as a class of HS-frames, the previous results on $g$-frames can be obtained as a special case of the results we established for HS-frames.
\end{rmk}

\section*{Acknowledgments}
The author is deeply indebted to Prof. Radu Balan for several valuable comments and suggestions. The author is grateful to the United States-India Educational Foundation for providing the Fulbright-Nehru Doctoral Research Fellowship, and Department of Mathematics, University of Maryland, College Park, USA for the support provided during the period of this work. He would also like to express his gratitude to the Norbert Wiener Center for Harmonic Analysis and Applications at the University of Maryland, College Park for its kind hospitality, and the Indian Institute of Technology Guwahati, India for its support. Further, the author thanks the anonymous referee for valuable suggestions which helped to improve the paper.

\bibliographystyle{plain}

\end{document}